\title[]{Interior Electroneutrality in Nernst-Planck-Navier-Stokes Systems}
\author{Peter Constantin}
\address{Department of Mathematics, Princeton University, Princeton, NJ 08544}
\email{const@math.princeton.edu}
\author{Mihaela Ignatova}
\address{Department of Mathematics, Temple University, Philadelphia, PA 19122}
\email{ignatova@temple.edu}
\author{Fizay-Noah Lee}
\address{Program in Applied and Computational Mathematics, Princeton University, Princeton, NJ 08544}
\email{fizaynoah@princeton.edu}
\newcommand{\pa}{\partial}
\newcommand{\la}{\label}
\newcommand{\fr}{\frac}
\newcommand{\na}{\nabla}
\newcommand{\be}{\begin{equation}}
\newcommand{\ee}{\end{equation}}
\newcommand{\bal}{\begin{aligned}}
\newcommand{\eal}{\end{aligned}}
\newcommand{\ba}{\begin{array}{l}}
\newcommand{\ea}{\end{array}}
\newtheorem{thm}{Theorem}
\newtheorem{rem}{Remark}
\newtheorem{cor}{Corollary}
\newcommand{\beg}{\begin}
\renewcommand{\div}{{\mbox{div}\,}}
\newcommand{\D}{\Delta}
\date{today}
\begin{document}
\begin{abstract}
We consider the limit of vanishing Debye length for ionic diffusion in fluids, described by the Nernst-Planck-Navier-Stokes system. In the asymptotically stable cases of blocking (vanishing normal flux) and uniform selective (special Dirichlet) boundary conditions for the ionic concentrations, we prove that the ionic charge density $\rho$ converges in time to zero in the interior of the domain, in the limit of vanishing Debye length  ($\epsilon\to 0$). For the unstable regime of Dirichlet boundary conditions for the ionic concentrations, we prove bounds that are uniform in time and $\epsilon$. We also consider electroneutral boundary conditions, for which we prove that electroneutrality $\rho\to 0$ is achieved at any fixed $\epsilon> 0$, exponentially fast in time in $L^p$, for all $1\le p<\infty$. The results hold for two oppositely charged ionic species with arbitrary ionic diffusivities, in bounded domains with smooth boundaries. 
\end{abstract}
\keywords{electroneutrality, Debye length, Poisson-Boltzmann, ionic electrodiffusion, Nernst-Planck, Navier-Stokes}

\noindent\thanks{\em{ MSC Classification:  35Q30, 35Q35, 35Q92.}}

\maketitle
\begin{section}{Introduction}
Interior electroneutrality is the vanishing of electrical charge away from boundaries.  This is an equilibrium feature of electrolytes in fluids, at distances larger than the Debye length from charged boundaries.  Ionic diffusion of electrolytes in solvents is decribed by the  Nernst-Planck-Navier-Stokes (NPNS) system. We study the NPNS system in an open connected  bounded domain $\Omega\subset\mathbb{R}^d$, $d=2,3$ with smooth boundary. The domain need not be simply connected. The system describes the time evolution of ionic concentrations in a Newtonian fluid \cite{rubibook}. Ions are transported by the fluid, and diffuse under the influence of the gradient of an electrochemical potential generated by the local charge density $\rho$ and the applied voltage on the boundaries. The fluid is forced by the electrical force exerted by the ionic charges. The system is given by the Nernst-Planck equations
\be
\pa_t c_i +u\cdot\na c_i =D_i\div(\na c_i+z_ic_i\na\Phi)
\la{np}
\ee
for $i=1,2,...,m,$ coupled to the Poisson equation
\be
-\epsilon\D\Phi=\rho
\label{poisson}
\ee
and to the Navier-Stokes equations
\be
\pa_t u+u\cdot\na u-\nu\D u+\na p=-K\rho\na\Phi,\quad \na\cdot u=0
\ee
where
\be
\rho=\sum_{i=1}^mz_ic_i.
\ee
The function $c_i(x,t)$ represents the local concentration of the $i$-th ionic species, $\Phi(x,t)$ is the electrical potential and $\rho(x,t)$ is the local charge density. The constants $z_i$ and $D_i > 0$ are, respectively, the ionic valence and ionic diffusivity of  the $i$-th ionic species. Although it is sometimes mathematically inconvenient, for most applications it is important to allow for unequal diffusivities. The constant $\epsilon>0$ is a rescaled dielectric permittivity of the solvent and is proportional to the square of the Debye length. The Debye length is typically very small, of the order of a few nanometers in electrolytes. The kinematic viscosity of the fluid is given by $\nu>0$, and $K$ is a positive coupling constant given by the product of the Boltzmann constant $k_B$ and the absolute temperature of the system $T_K$. The electrical potential $\Phi$ and charge density $\rho$ have been nondimensionalized so that $(k_BT_K/e)\Phi$ and $e\rho$ respectively are their dimensional counterparts, where $e$ is elementary charge.

The electrochemical potentials are
\be
\mu_i = \log{c_i} + z_i\Phi
\la{mui}
\ee
for $i=1,2,...,m$. In terms of the electrochemical potential, the Nernst-Planck equations (\ref{np}) are given by
\be
(\pa_t  +  u\cdot\na)c_i = D_i\div(c_i\na\mu_i), \quad  i=1,\dots m.
\la{npmu}
\ee

In this work, we consider the case of two ionic species, $m=2$, with ionic valences $z_1=1$ and $z_2=-1$. The boundary conditions for $\Phi$ are inhomogeneous Dirichlet boundary conditions
\be
\Phi(x,t)_{|\pa\Omega}=W(x)
\la{W}
\ee
where $W(x)$ is a given function of space, which we assume to be time independent and smooth. The boundary conditions for the Navier-Stokes equations are no slip,  homogeneous Dirichlet,
\be
u_{|\pa\Omega}=0.
\ee
We consider four sets of boundary conditions for $c_i$. Blocking boundary conditions, which correspond to impermeable boundaries that block ionic transport are
\be
(\textbf{BL})\quad n\cdot(\na c_i+z_ic_i\na\Phi)_{|\pa\Omega}=0,\quad i=1,2
\ee
where $n$ is the outward pointing unit normal vector along $\pa\Omega$.   
Therefore, the blocking boundary conditions are homogeneous Neumann conditions for the electrochemical potentials,
\be
(\textbf{BL})\quad  n\cdot \na{\mu_i}_{|\pa\Omega}=0,\quad i= 1,2.
\la{mublock}
\ee

Dirichlet boundary conditions for the ionic concentrations model ion-selective (or permselective) membranes along which a fixed concentration of ions is maintained. They are
\be
(\textbf{DI})\quad {c_i}_{|\pa\Omega}=\gamma_i,\quad i=1,2,
\ee
where $\gamma_i=\gamma_i(x)$ are positive smooth time-independent functions on the boundary. In view of the Dirichlet boundary condition (\ref{W}) for the potential, the Dirichlet boundary conditions are inhomogeneous Dirichlet boundary conditions for the electrochemical potentials
\be
(\textbf{DI})\quad {\mu_i}_{|\pa\Omega} = \log(\gamma_i) + z_i W,\quad i=1,2.
\ee

Uniform selective boundary conditions \cite{ci, np3d} are
\be
(\textbf{US}) \quad {c_i}_{|S_i}=\gamma_i,\, n\cdot(\na c_i+z_ic_i\na\Phi)_{|\pa\Omega\backslash S_i}=0,\quad i=1,2
\ee
where $S_i\subset\pa\Omega$ are boundary portions. We require additionally that
\be
(\log\gamma_i(x)+z_i W(x))_{|S_i}=\log Z_i^{-1}, 
\la{us}
\ee
holds with $Z_i>0$ {\em{constant in space and time}} when $S_i\neq \emptyset$. We take this to hold for at least one of $i=1,2$, otherwise the boundary conditions ({\textbf{US}}) coincide with ({\textbf{BL}}). 
The uniform selective boundary conditions thus require the constancy of the electrochemical potential on a portion of the boundary, and the vanishing of its normal derivative on the rest of the boundary,   
\be
(\textbf{US})\quad {\mu_i}_{|S_i} = \log Z_i^{-1}, \quad\quad \quad  n\cdot \na{\mu_i}_{|\pa\Omega\setminus S_i} = 0.
\la{usmui}
\ee

Electroneutral boundary conditions are
\be
(\textbf{EN})\quad {c_1}_{|\pa\Omega}={c_2}_{|\pa\Omega},\, n\cdot\na(c_1+c_2)_{|\pa\Omega}=0.
\ee
We denote the total salt concentration by $\sigma= c_1+c_2$. In terms of $\rho$ and $\sigma$, the electroneutral boundary conditions (\textbf{EN}) are homogeneous Dirichlet boundary conditions for the charge density and homogeneous Neumann conditions for the salt concentration, 
\be
\rho_{|\pa\Omega}=0,\,n\cdot\na\sigma_{|\pa\Omega}=0.
\la{EN}
\ee

In the absence of requirement (\ref{us}), the  boundary conditions (\textbf{DI})  are an example of general selective boundary conditions \cite{ci}. The choice of boundary conditions for the ionic concentrations and  the electrical potential plays an important role in the dynamics of the solutions of the NPNS system, and several different boundary conditions, including those considered in this paper, have been studied in the literature. For two dimensions and (\textbf{BL}) boundary conditions, global well posedness and asymptotic behavior are obtained in \cite{biler,biler2,choi,gajewski} for the system without fluid. The full  NPNS system with (\textbf{BL}) in two dimensions is addressed in \cite{bothe,ryham}, where for the electrical potential $\Phi$, Robin and homogeneous Dirichlet boundary conditions, respectively, are taken and global existence and stability are shown. For three dimensions, global well posedness is known in some but not all physically relevant cases. The lack of well posedness results in three dimensions is not solely due to the coupling with the Navier-Stokes equations, for which global existence is a major open problem. Even for the system uncoupled to fluid flow or the system coupled to Stokes flow instead, global well posedness in three dimensions is unknown in full generality. In \cite{schmuck}, global existence of weak solutions in three dimensions is shown for homogeneous Neumann boundary conditions on the potential. Recently, in \cite{liu}, the authors obtained analogous results in the case of no boundaries, $\Omega=\mathbb{R}^3$.

For (\textbf{BL}) and (\textbf{US}) boundary conditions for the ionic concentrations and inhomogeneous Dirichlet boundary conditions for the potential, global existence of strong solutions of the NPNS system is known in two dimensions for arbitrary large initial data \cite{ci}. For the same boundary conditions in three dimensions, global smooth solutions exist for initial conditions that are sufficiently small perturbations of steady state solutions \cite{np3d}. In all these cases with global existence, as time tends to infinity, the solutions converge to unique stationary solutions selected by the initial total concentrations and the boundary conditions. In the Dirichlet case (\textbf{DI}), global existence was shown in \cite{cil} for any spatial dimension by establishing uniform bounds depending on the parameter $\epsilon$. In this latter case, we do not expect stability results, as numerical simulations, experiments and rigorous analysis of simplified models suggest that instabilities may occur in this regime \cite{davidson,rubinstein,rubizaltz,zaltzrubi}.

The main results of this paper are as follows. In the cases with global existence and stability (\textbf{BL}) and (\textbf{US}), we show that the charge density $\rho$ vanishes in the interior of the domain $\Omega$ in the long time limit $t\to\infty$, in the limit of vanishing Debye length, $\epsilon\to 0$. That is, for any fixed initial conditions in 2D and any compact $K$ included in $\Omega$, we have
\be
\lim_{\epsilon\to 0}\lim_{t\to\infty}\sup_{x\in K}|\rho(x,t)| = 0.
\la{rhotozero}
\ee
The same result holds in 3D with the same boundary conditions, for small perturbations of steady states. This  result is a mathematical verification of the physical fact that in the stable cases, electroneutrality ($\rho\sim 0$) holds away from the boundaries. 

In \cite{ci} it was shown that for each fixed $\epsilon>0$ the solutions $(c_1,c_2,\Phi,u)$ converge in time to the steady state solutions 
$({c_1^*}_{\epsilon}, {c_2^*}_{\epsilon},\Phi^*_{\epsilon},0)$, where $\Phi^*_{\epsilon}$ is the unique solution of the Poisson-Boltzmann equation
\be
(\textbf{PB}_\epsilon)\quad -\epsilon\D\Phi^*_{\epsilon}=\fr{e^{-\Phi^*_{\epsilon}}}{Z_1}-\fr{e^{\Phi^*_{\epsilon}}}{Z_2}
\la{pb}
\ee
subject to Dirichlet data (\ref{W}). The constants 
$Z_i$, $i=1,2$,  are given by  
\be
Z_i^{-1}=I_0\left(\int_\Omega e^{-z_i\Phi^*_{\epsilon}}\,dx\right)^{-1}
\la{zibl}
\ee
with $z_1=1,\,z_2=-1$ in the (\textbf{BL}) case. 
In the (\textbf{US}) case, $Z_i$ is given by (\ref{us}) if $S_i\neq \emptyset$. If $S_i=\emptyset$, then $Z_i$ is given by (\ref{zibl}), as in the (\textbf{BL}) case. 

With $\Phi^*_{\epsilon}$ thus defined, the stationary ionic concentrations ${c_i^*}_{\epsilon}$ are given by the Boltzmann states,
\be
{c_i^*}_{\epsilon}(x)=\fr{e^{-z_i\Phi^*_{\epsilon}(x)}}{Z_i}.
\ee
The choices of $Z_i>0$ are precisely such that $\|c_i(0)\|_{L^1}=I_0=\|{c_i^*}_{\epsilon}\|_{L^1}$ in the (\textbf{BL}) case and such that ${c_i}_{|S_i}=\gamma_i={{c_i^*}_{\epsilon}}_{|S_i}$ in the (\textbf{US}) case.

In \cite{ci}, the convergence $c_i\rightarrow{c_i^*}_{\epsilon}$ as $t\to\infty$ holds in the space $H^1$. It is not difficult to verify that convergence in $L^\infty$ can be obtained by taking into consideration the uniform bounds in stronger norms (e.g. $L^2_t H^2_x$), also established in the same paper. Thus, we know that for each fixed $\epsilon>0$, $\|\rho(t)- \rho^*_\epsilon\|_{L^\infty}\to 0$ as $t\to \infty$  where $\rho^*_{\epsilon} = {c_1^*}_{\epsilon}- {c_2^*}_{\epsilon}$ is the charge density of the Boltzmann state. 
In order to prove the convergence (\ref{rhotozero}) in these cases,
what remains to be shown is that for each compact $K\subset\Omega$ the uniform convergence 
 \be
\lim_{\epsilon\to 0}\sup_{x\in K}\left|{\rho^*_\epsilon}(x)\right| =0
\la{rhostartozero}
\ee
holds. Here and in the rest of the paper, the subscript $\epsilon$ is used to emphasize that $\Phi^*_\epsilon$, and correspondingly $\rho^*_\epsilon$, arises as the solution to $(\textbf{PB}_\epsilon)$, for a specific choice of $\epsilon$ and boundary conditions.

In the case of the boundary conditions (\textbf{BL}) and (\textbf{US}) we distinguish three different types of (\textbf{US}) boundary conditions. One type is when  both the cation concentration ($c_1$) and anion concentration ($c_2$) have selective boundary portions (i.e. $S_i\neq\emptyset$ for both $i=1,2$), and two additional types are when one ionic species has a selective boundary portion while the other species is subject to purely blocking boundary conditions. Thus, in total, there are four different boundary conditions for which the uniform convergence (\ref{rhostartozero}) must be shown. These are proved in Theorems~\ref{pb2}--\ref{pb4} below. The proofs share some common elements and are based on the respective variational structures of the four Poisson-Boltzmann equations.

In the case of (\textbf{DI}) boundary conditions, we do not expect stability in general. We show that for arbitrary (\textbf{DI}) boundary conditions, the ionic concentrations do not grow larger than allowed by the Dirichlet and initial data. In particular, the ionic concentrations obey uniform bounds that do not depend on $\epsilon$ and consequently the charge densities are bounded uniformly, independently of the Debye length. The bound is obtained from a maximum principle for the two-by-two system of evolution equations for the concentrations. Such a uniform bound is not known in general.

 In the last section, we show that under (\textbf{EN}) boundary conditions, electroneutrality is achieved exponentially fast in $L^p$, $2\le p<\infty$. In this case
\be
\|\rho(t)\|_{L^p} \le C_pe^{-\lambda_pt}
\la{rholptozero}
\ee
holds from arbitrary initial data, at each fixed $\epsilon$,  with $C_p$, $\lambda_p$ independent of $\epsilon$. 
The a~priori upper bound (\ref{rholptozero}) is proved on the basis of the $p=2$ case and a maximum principle for the system with these boundary conditions.

\end{section}

\begin{section}{Asymptotic electroneutrality of equilibria}\la{eqel}

We consider here Poisson-Boltzmann equations corresponding to blocking and uniformly selective boundary conditions. In the first result, Theorem~\ref{pb2}, we address uniform selective boundary conditions in which both the anions and the cations have selective boundary conditions.

\begin{thm}\la{pb2}
Let  $Z_1,Z_2>0$ be fixed given positive constants, and let $\Phi_\epsilon^*$ be the unique solution of the Poisson-Boltzmann equation
\be
-\epsilon\D\Phi^*_\epsilon = \rho^*_\epsilon
\la{pbunif}
\ee
with 
\be
\rho^*_{\epsilon} = \fr{e^{-\Phi^*_\epsilon}}{Z_1}-\fr{e^{\Phi^*_\epsilon}}{Z_2},
\la{rhostareps}
\ee
and with boundary condition ${\Phi_\epsilon^*}_{|\partial\Omega}=W$. 
Then for each compact subset $K\subset\Omega$, we have
\be
\lim_{\epsilon\to 0}\sup_{x\in K}\left|\rho_\epsilon^*(x)\right| = 0.
\la{rhoepstozero}
\ee

\end{thm}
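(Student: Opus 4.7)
Let $\Phi_0=\tfrac12\log(Z_2/Z_1)$ denote the unique zero of the map $\Phi\mapsto e^{-\Phi}/Z_1-e^\Phi/Z_2$, set $\alpha=1/\sqrt{Z_1Z_2}>0$, and write $v_\epsilon=\Phi^*_\epsilon-\Phi_0$. Then $\rho^*_\epsilon=-2\alpha\sinh v_\epsilon$, and \eqref{pbunif} becomes the scalar semilinear problem
\be
-\epsilon\D v_\epsilon+2\alpha\sinh(v_\epsilon)=0\quad\text{in }\Omega,\qquad {v_\epsilon}_{|\pa\Omega}=W-\Phi_0.
\ee
It therefore suffices to prove $v_\epsilon\to 0$ uniformly on each compact $K\subset\Omega$. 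Since $\sinh(t)$ has the sign of $t$, the maximum principle applied to the displayed equation rules out interior positive maxima and interior negative minima of $v_\epsilon$, yielding the $\epsilon$-independent bound $\|v_\epsilon\|_{L^\infty(\Omega)}\le\|W-\Phi_0\|_{L^\infty(\pa\Omega)}$.

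The plan is to upgrade this $L^\infty$ bound to interior uniform convergence by a blow-up at the Debye scale $\sqrt{\epsilon}$ combined with a Liouville-type statement. Suppose, for contradiction, that there exist $\delta>0$, $\epsilon_n\to 0$ and $x_n\in K$ with $|v_{\epsilon_n}(x_n)|\ge\delta$. Define the rescaled profiles $\tilde v_n(y)=v_{\epsilon_n}(x_n+\sqrt{\epsilon_n}\,y)$ on the dilated domains $\Omega_n=(\Omega-x_n)/\sqrt{\epsilon_n}$, which grow to fill $\Rr^d$ because $\text{dist}(K,\pa\Omega)>0$. Each $\tilde v_n$ is uniformly bounded and satisfies $-\D\tilde v_n+2\alpha\sinh(\tilde v_n)=0$, with right-hand side $2\alpha\sinh(\tilde v_n)$ uniformly bounded in $L^\infty$. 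Interior Schauder estimates therefore give uniform $C^{2,\alpha}_{\mathrm{loc}}$ bounds, and along a subsequence $\tilde v_n\to\tilde v$ in $C^2_{\mathrm{loc}}(\Rr^d)$, where $\tilde v$ is bounded, satisfies $-\D\tilde v+2\alpha\sinh(\tilde v)=0$ on all of $\Rr^d$, and has $|\tilde v(0)|\ge\delta$.

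The decisive step is a Liouville theorem: every bounded classical solution of $-\D u+2\alpha\sinh u=0$ on $\Rr^d$ vanishes identically. I would prove this by a translation-and-compactness argument: setting $M=\sup_{\Rr^d}u$ and picking $y_k$ with $u(y_k)\to M$, the translates $u(\cdot+y_k)$ admit a $C^2_{\mathrm{loc}}$ subsequential limit $u_\infty$ that attains its supremum $M$ at the origin, so $\D u_\infty(0)\le 0$ forces $\sinh(M)\le 0$ and hence $M\le 0$; symmetrically $\inf u\ge 0$, so $u\equiv 0$. This contradicts $|\tilde v(0)|\ge\delta$, giving $\sup_K|v_\epsilon|\to 0$, and \eqref{rhoepstozero} then follows from $\rho^*_\epsilon=-2\alpha\sinh(v_\epsilon)$ by continuity of $\sinh$. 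The main obstacle is the Liouville step; as an alternative route, one can exploit the variational characterization of $\Phi^*_\epsilon$ as the unique minimizer of $F_\epsilon[\Phi]=\int_\Omega\bigl(\tfrac{\epsilon}{2}|\na\Phi|^2+e^{-\Phi}/Z_1+e^\Phi/Z_2\bigr)\,dx$ subject to $\Phi_{|\pa\Omega}=W$, comparing against a boundary-layer trial function equal to $\Phi_0$ off an $O(\sqrt{\epsilon})$-strip, which gives a quantitative $L^2$ smallness of $v_\epsilon$ that can be upgraded to the same uniform convergence by interior regularity.
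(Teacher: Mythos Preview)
Your main argument is correct: the maximum-principle bound $\|v_\epsilon\|_{L^\infty}\le\|W-\Phi_0\|_{L^\infty(\partial\Omega)}$, the blow-up at scale $\sqrt{\epsilon}$, the interior elliptic compactness, and the translation-based Liouville theorem for bounded solutions of $-\Delta u+2\alpha\sinh u=0$ on $\Rr^d$ all work as you describe. One small technical remark: from $\sinh(\tilde v_n)\in L^\infty$ the first pass of elliptic regularity gives only $W^{2,p}_{\mathrm{loc}}$ control; a routine bootstrap then yields $C^{2,\alpha}_{\mathrm{loc}}$, and in fact $C^1_{\mathrm{loc}}$ compactness already suffices to pass to the limit and run the Liouville argument.

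The paper's proof is precisely the variational alternative you sketch at the end, but with a sharper final step than ``upgrade $L^2$ smallness by interior regularity.'' After showing via boundary-layer competitors that $J_\epsilon[\Phi^*_\epsilon]\to G(Z)|\Omega|$ with $G(y)=e^{-y}/Z_1+e^{y}/Z_2$ and $Z=\Phi_0$, they observe that $x\mapsto G(\Phi^*_\epsilon(x))$ is \emph{subharmonic}, since $\Delta G(\Phi^*_\epsilon)=G''(\Phi^*_\epsilon)|\nabla\Phi^*_\epsilon|^2+\epsilon^{-1}G'(\Phi^*_\epsilon)^2\ge 0$. The mean-value inequality on balls of radius $r=\mathrm{dist}(K,\partial\Omega)$ then gives $G(\Phi^*_\epsilon(x_0))|B_r|\le\int_{B_r}G(\Phi^*_\epsilon)\le J_\epsilon[\Phi^*_\epsilon]-G(Z)|\Omega\setminus B_r|$, forcing $G(\Phi^*_\epsilon(x_0))\to G(Z)$ uniformly on $K$, hence $\rho^*_\epsilon=-G'(\Phi^*_\epsilon)\to 0$. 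This is more elementary in that it avoids Schauder theory, Arzel\`a--Ascoli, and any Liouville statement. Your blow-up route, on the other hand, never touches the energy functional or the boundary-layer construction; it is purely a PDE argument driven by the sign structure $v\sinh v\ge 0$, and would transfer verbatim to non-variational perturbations such as $-\epsilon\Delta v+b\cdot\nabla v+2\alpha\sinh v=0$ with bounded drift $b$.
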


\begin{proof}

The proof of Theorem~\ref{pb2} uses the variational nature of the solution to the elliptic equation (\ref{pbunif}), (\ref{rhostareps}) with Dirichlet data $W$. The solution of this problem is the unique minimizer of the energy functional 
\be
J_\epsilon[\psi]=\int_\Omega \left(\fr{\epsilon}{2}|\na\psi|^2+\fr{e^{-\psi}}{Z_1}+\fr{e^{\psi}}{Z_2}\right)dx
\la{J}
\ee
on the set 
$\mathcal A=\{\psi\in H^1(\Omega)\,|\, e^{\psi},e^{-\psi}\in L^1(\Omega),\,\psi_{|\pa\Omega}=W\}$ \cite{ci}. We prove that the limit
\be
\lim_{\epsilon\to 0}\min_{\psi\in\mathcal{A}}J_\epsilon[\psi]=\lim_{\epsilon\to 0} J_\epsilon[\Phi_\epsilon^*]
\ee
exists and we compute it explicitly. This allows us to deduce the convergence of $\rho_\epsilon^*$ to $0$. We define
\be
G(y)=\fr{e^{-y}}{Z_1}+\fr{e^y}{Z_2}
\la{G}
\ee
so that $J_\epsilon[\psi]=\int_\Omega \left(\fr{\epsilon}{2}|\na\psi|^2+G(\psi)\right)dx$, and we also define
\be
Z=\fr{1}{2}\log\fr{Z_2}{Z_1}.
\la{Zdef}
\ee
We note that $G(y)$ attains its unique global minimum at $y=Z$.

\noindent{\bf Step 1.} \la{lim2} We have
\be
\lim_{\epsilon\to 0}\min_{\psi\in A}J_\epsilon[\psi] = \lim_{\epsilon\to 0}J_\epsilon[\Phi_\epsilon^*]=G(Z)|\Omega|.\la{limm2}
\ee

\noindent Indeed, the lower bound for $J_\epsilon[\cdot]$,
\be
J_\epsilon[\Phi_\epsilon^*]\ge\int_\Omega G(\Phi_\epsilon^*)\,dx\ge G(Z)|\Omega|,\la{lbb}
\ee
follows directly from (\ref{J}), (\ref{G}) and (\ref{Zdef}).
Next, we take as test functions $\psi_\delta\in \mathcal A$, which satisfy the properties 
1) $\psi_\delta-Z$ is supported in $\Omega\backslash \Omega_\delta$, 
with $\Omega_\delta=\{x\in\Omega\,|\, \inf_{y\in\pa\Omega}|x-y|>\delta\}$, 
2) $|\na\psi_\delta|\sim \mathcal{O}(\delta^{-1})$, 
and 
3) $|\psi_\delta|\le |Z|+ \sup |W|$. For such test functions, we see that
\be
J_\epsilon[\psi_\delta]\le \fr{\epsilon}{2}C_W\delta^{-1}+\int_\Omega \fr{e^{-\psi_\delta}}{Z_1}+\fr{e^{\psi_\delta}}{Z_2}\,dx
\ee
where $C_W$ is a constant depending on $W$ and $Z$ but is independent of $\delta$. Then choosing for instance $\delta(\epsilon) =\epsilon^{1/2}$ and applying the dominated convergence theorem for the second term on the right hand side, we obtain
\be
\limsup_{\epsilon\to 0}J_\epsilon[\Phi^*_\epsilon]\le\limsup_{\epsilon\to 0}J_\epsilon\left[\psi_{\delta(\epsilon)}\right]\le G(Z)|\Omega|.
\la{ubb}
\ee
Together with the lower bound (\ref{lbb}), the conclusion (\ref{limm2}) follows, and the proof of Step~1 is complete.

\noindent{\bf{Step 2.}} We claim that  
\be
\lim_{\epsilon\to 0}\Phi_\epsilon^*(x) = Z
\la{limphistarz}
\ee
holds uniformly for $x\in K$. 
To prove the claim, we first observe that because $G$ is convex, we have
\be
\D(G(\Phi_{\epsilon}^*))=G''(\Phi_{\epsilon}^*)|\na\Phi_{\epsilon}^*|^2+\fr{1}{\epsilon}G'(\Phi_{\epsilon}^*)^2\ge 0
\ee
where we used the fact that $\epsilon\D\Phi_{\epsilon}^*=G'(\Phi_{\epsilon}^*)$. Thus the function $x\mapsto G(\Phi_{\epsilon}^*(x))$ is subharmonic. Then, if $B=B_r$ is a ball centered at $x_0\in K$ with radius $r=d(\pa\Omega,\pa K)=\inf\{|x-y|\,|\, x\in\pa\Omega,\,y\in\pa K\}$, we have
\be
\int_BG(\Phi_{\epsilon}^*)\,dx\ge G(\Phi_{\epsilon}^*(x_0))|B|.
\ee
Thus
\be
\begin{aligned}
J_\epsilon[\Phi_\epsilon^*]\ge\int_\Omega G(\Phi_{\epsilon}^*)\,dx&=\int_{\Omega\backslash B} G(\Phi_{\epsilon}^*)\,dx+\int_B G(\Phi_{\epsilon}^*)\,dx\\
&\ge G(Z)|\Omega\backslash B|+G(\Phi_\epsilon^*(x_0))|B|\\
&= G(Z)|\Omega|+(G(\Phi_\epsilon^*(x_0))-G(Z))|B|.
\end{aligned}
\la{cont}
\ee
Thus, from (\ref{limm2}), we have 
\be
\lim_{\epsilon\to 0}J_{\epsilon}[\Phi_{\epsilon}^*]= G(Z)|\Omega|,
\la{conv}
\ee
and recalling that $G(y)$ attains its global minimum at $y=Z$, we obtain that $G(\Phi_\epsilon^*(x_0))\to G(Z)$ and thus that $\Phi_\epsilon^*(x_0)\to Z$ as $\epsilon\to 0$. The convergence is uniform in $K$ because we can choose a ball $B$ of radius $r$ for each $x_0\in K$ and because the convergence rate in (\ref{conv}) does not depend on the choice of $x_0$.   
 
The fact that (\ref{limphistarz}) holds completes the proof of Theorem~\ref{pb2} because we have that 
$\rho_\epsilon^*(x)=-G'(\Phi_\epsilon^*(x))$ and $G'(Z)=0$. \end{proof}

Next we consider the case of (\textbf{BL}) boundary conditions.
\begin{thm}\la{pb1} Let $I_0>0$ be given and let $\Phi_\epsilon^*$ be 
the unique solution of the Poisson-Boltzmann equation
\be
-\epsilon\D\Phi^*_\epsilon =\rho^*_\epsilon
\la{pbblock}
\ee
with
\be
\rho^*_{\epsilon} = I_0\left(\fr{e^{-\Phi^*_\epsilon}}{\int_\Omega e^{-\Phi^*_\epsilon}\,dx}-\fr{e^{\Phi^*_\epsilon}}{\int_\Omega e^{\Phi^*_\epsilon}\,dx}\right)
\la{rhostarepsi}
\ee
and with boundary condition ${\Phi_\epsilon^*}_{|\partial\Omega}=W$. Then for each compact $K\subset\Omega$, we have
\be
\lim_{\epsilon\to 0}\sup_{x\in K}\left |\rho_\epsilon^*(x)\right| = 0.
\la{limrhostarepsil}
\ee
\end{thm}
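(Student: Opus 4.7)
The proof follows the general structure of Theorem~\ref{pb2}, but with the added complication that the analogues of the constants $Z_1,Z_2$ are now coupled to $\Phi^*_\epsilon$ through the normalizing integrals appearing in (\ref{rhostarepsi}), so one must track the $\epsilon$-dependence of these auxiliary quantities carefully. The plan is first to extract a variational limit, then to convert it into a pointwise statement via subharmonicity, as in Theorem~\ref{pb2}.

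I would begin by identifying the variational principle. A short Euler-Lagrange computation shows that $\Phi^*_\epsilon$ is the unique minimizer on $\mathcal A$ of the strictly convex functional
\be
J_\epsilon[\psi]=\int_\Omega\fr{\epsilon}{2}|\na\psi|^2\,dx+I_0\log\int_\Omega e^{-\psi}\,dx+I_0\log\int_\Omega e^{\psi}\,dx.
\ee
By Cauchy-Schwarz, $\int_\Omega e^{-\psi}\,dx\,\int_\Omega e^{\psi}\,dx\ge|\Omega|^2$ for every $\psi$, with equality iff $\psi$ is constant, so $J_\epsilon[\Phi^*_\epsilon]\ge 2I_0\log|\Omega|$. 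For the matching upper bound I use the same family of test functions $\psi_\delta$ as in Theorem~\ref{pb2}, now with interior value $0$: ${\psi_\delta}_{|\pa\Omega}=W$, $\psi_\delta\equiv 0$ on $\Omega_\delta$, $|\psi_\delta|\le\sup|W|$, and $|\na\psi_\delta|\lesssim\delta^{-1}$. With $\delta(\epsilon)=\epsilon^{1/2}$ and dominated convergence, $\limsup_{\epsilon\to 0}J_\epsilon[\psi_{\delta(\epsilon)}]\le 2I_0\log|\Omega|$, hence
\be
\lim_{\epsilon\to 0}J_\epsilon[\Phi^*_\epsilon]=2I_0\log|\Omega|\quad\text{and}\quad\int_\Omega e^{-\Phi^*_\epsilon}\,dx\int_\Omega e^{\Phi^*_\epsilon}\,dx\to|\Omega|^2.
\ee

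Next I would reduce to the Theorem~\ref{pb2} framework with $\epsilon$-dependent coefficients. Set $\tilde Z_1(\epsilon)=I_0^{-1}\int_\Omega e^{-\Phi^*_\epsilon}\,dx$ and $\tilde Z_2(\epsilon)=I_0^{-1}\int_\Omega e^{\Phi^*_\epsilon}\,dx$, so that $\rho^*_\epsilon=-G_\epsilon'(\Phi^*_\epsilon)$ for $G_\epsilon(y)=e^{-y}/\tilde Z_1+e^y/\tilde Z_2$, which attains its minimum $G_\epsilon(Z_\epsilon)=2/\sqrt{\tilde Z_1\tilde Z_2}$ at $Z_\epsilon=\tfrac12\log(\tilde Z_2/\tilde Z_1)$. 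Exactly as in Theorem~\ref{pb2}, $G_\epsilon(\Phi^*_\epsilon)$ is subharmonic, and two direct computations give
\be
\int_\Omega G_\epsilon(\Phi^*_\epsilon)\,dx=2I_0\quad\text{and}\quad G_\epsilon(Z_\epsilon)|\Omega|=\fr{2|\Omega|}{\sqrt{\tilde Z_1\tilde Z_2}}\to 2I_0,
\ee
the second being a direct consequence of the variational limit above.

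Arguing as in (\ref{cont}), for each $x_0\in K$ and $B=B_r(x_0)$ with $r=d(\pa\Omega,\pa K)$, the submean value inequality combined with the pointwise bound $G_\epsilon\ge G_\epsilon(Z_\epsilon)$ yields
\be
(G_\epsilon(\Phi^*_\epsilon(x_0))-G_\epsilon(Z_\epsilon))|B|\le 2I_0-G_\epsilon(Z_\epsilon)|\Omega|\to 0
\ee
uniformly in $x_0\in K$. To convert this into convergence of $\rho^*_\epsilon$ I use the algebraic identity $G_\epsilon(y)^2-G_\epsilon'(y)^2=4/(\tilde Z_1\tilde Z_2)=G_\epsilon(Z_\epsilon)^2$, which gives
\be
(\rho^*_\epsilon(x_0))^2=\bigl(G_\epsilon(\Phi^*_\epsilon(x_0))-G_\epsilon(Z_\epsilon)\bigr)\bigl(G_\epsilon(\Phi^*_\epsilon(x_0))+G_\epsilon(Z_\epsilon)\bigr).
\ee
The first factor tends to $0$ uniformly on $K$; the second is uniformly bounded since $G_\epsilon(Z_\epsilon)\to 2I_0/|\Omega|$ and the previous display forces $G_\epsilon(\Phi^*_\epsilon(x_0))$ to be close to it. The main obstacle compared with Theorem~\ref{pb2} is that the individual normalizations $\tilde Z_i(\epsilon)$ and the centering constant $Z_\epsilon$ are not obviously bounded as $\epsilon\to 0$; the saving point is that $\rho^*_\epsilon$ and the above identity depend on them only through the product $\tilde Z_1\tilde Z_2$, whose limit is pinned down precisely by the variational argument.
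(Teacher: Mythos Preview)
Your argument is correct and in fact streamlines the paper's proof. Both start from the same variational limit $J_\epsilon[\Phi^*_\epsilon]\to 2I_0\log|\Omega|$ and its consequence $\tilde Z_1\tilde Z_2\to|\Omega|^2/I_0^2$, but from there the routes diverge. The paper proceeds by extracting $L^1$ convergence of $\rho^*_\epsilon$ from the equality case in Cauchy--Schwarz (the identity $\|u\|^2\|v\|^2=|(u,v)|^2+\|z\|^2\|v\|^2$ with $u=e^{\Phi^*_\epsilon/2}$, $v=e^{-\Phi^*_\epsilon/2}$), then proves an $\epsilon$-independent maximum principle $\inf W\le\Phi^*_\epsilon\le\sup W$ in order to upgrade this to $L^2$ convergence, and finally uses the subharmonicity of $(\rho^*_\epsilon)^2$ together with the submean value inequality to conclude. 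Your approach instead mimics Theorem~\ref{pb2} directly: you use the subharmonicity of $G_\epsilon(\Phi^*_\epsilon)$, the exact identity $\int_\Omega G_\epsilon(\Phi^*_\epsilon)\,dx=2I_0$, and the algebraic relation $G_\epsilon(y)^2-G_\epsilon'(y)^2=G_\epsilon(Z_\epsilon)^2$ to pass from uniform control of $G_\epsilon(\Phi^*_\epsilon(x_0))-G_\epsilon(Z_\epsilon)$ straight to uniform control of $\rho^*_\epsilon(x_0)$. The advantage of your route is that it bypasses the maximum principle and the $L^1\to L^2$ upgrade entirely, relying only on the product $\tilde Z_1\tilde Z_2$ and never on the individual normalizations; the advantage of the paper's route is that along the way it produces the global $L^2$ convergence $\|\rho^*_\epsilon\|_{L^2}\to 0$ and the $\epsilon$-uniform pointwise bounds on $\Phi^*_\epsilon$, which are of independent interest.
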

\begin{proof}
The unique solution of (\ref{pbblock})--(\ref{rhostarepsi}) with Dirichlet data $W$ is the minimizer of the functional
\be
I_\epsilon[\psi]=\fr{\epsilon}{2}\int_\Omega |\na\psi|^2\,dx+I_0\log\left(\int_\Omega e^{-\psi}\,dx\int_\Omega e^{\psi}\,dx\right)
\la{I}
\ee
on the set $\mathcal A=\{\psi\in H^1(\Omega)\,|\, e^{\psi},e^{-\psi}\in L^1(\Omega),\,\psi_{|\pa\Omega}=W\}$ \cite{ci}.

\noindent{\bf Step 1.} 
For the functional defined above in (\ref{I}), we have 
\be
\lim_{\epsilon\to 0}\min_{\psi\in A}I_\epsilon[\psi]=\lim_{\epsilon\to 0}I_\epsilon[\Phi_\epsilon^*]=2I_0\log|\Omega|. \la{limm1}
\ee

\noindent The proof of (\ref{limm1}) closely follows that of (\ref{limm2}). 
First, we observe that by Cauchy-Schwarz inequality we have
\be
\begin{aligned}
I_\epsilon[\Phi_\epsilon^*]&=\fr{\epsilon}{2}\int_\Omega |\na\Phi_\epsilon^*|^2\,dx+I_0\log\left(\int_\Omega e^{-\Phi_\epsilon^*}\,dx\int_\Omega e^{\Phi_\epsilon^*}\,dx\right)\\
&\ge I_0\log\left(\int_\Omega 1\,dx\right)^2\\
&=2I_0\log|\Omega|.
\la{lb}
\end{aligned}
\ee
Next, we take as test functions $\psi_\delta\in \mathcal A$ with the following properties: 1) $\psi_\delta$ is supported in $\Omega\backslash \Omega_\delta$, 2) $|\na\psi_\delta|\sim \mathcal{O}(\delta^{-1})$, and 3) $|\psi_\delta|\le \sup |W|$. Using these test functions, we obtain, as in the proof of (\ref{limm2}), taking for instance $\delta(\epsilon)=\epsilon^\fr{1}{2}$ and using dominated convergence,
\be
\limsup_{\epsilon\to 0} I_\epsilon[\Phi_\epsilon^*]\le\limsup_{\epsilon\to 0}I_\epsilon[\psi_{\delta(\epsilon)}]\le 2I_0\log|\Omega|.
\la{ub}
\ee
Combining (\ref{lb}) and (\ref{ub}), we obtain (\ref{limm1}).

\noindent{\bf Step 2.} We have
\be
\lim_{\epsilon\to 0}\int_\Omega e^{-\Phi_\epsilon^*}\,dx\int_\Omega e^{\Phi_\epsilon^*}\,dx=|\Omega|^2.
\la{limprod}
\ee

\noindent Indeed, we estimate as in (\ref{lb}),
\begin{align}
I_\epsilon[\Phi_\epsilon^*]\ge I_0\log\left(\int_\Omega e^{-\Phi_\epsilon^*}\,dx\int_\Omega e^{\Phi_\epsilon^*}\,dx\right)\ge 2I_0\log|\Omega|.
\end{align}
By (\ref{limm1}), the left hand side converges to $2I_0\log|\Omega|$ in the limit as $\epsilon\to 0$. Thus the middle term also converges to the same value, and (\ref{limprod}) follows.

\noindent{\bf Step 3.} We have the $L^1$ convergence  
\be
\lim_{\epsilon\to 0}\|\rho_\epsilon^*\|_{L^1} = 0.
\la{rhol1}
\ee

\noindent Toward the proof of (\ref{rhol1}), we set
\be
A_i(\epsilon)=\fr{1}{|\Omega|}\int_\Omega e^{-z_i\Phi_\epsilon^*}\,dx 
\ee
for $i=1,2$, with $z_1=1,\,z_2=-1$,  and we claim that
\be
\lim_{\epsilon\to 0}\left\|1-\fr{e^{-z_i\Phi_\epsilon^*}}{A_i(\epsilon)}\right\|_{L^1}=0.
\la{astozero}
\ee
In order to show (\ref{astozero}), we take advantage of the strong correlation between the two concentrations. Denoting by $\|\cdot\|$ the $L^2$ norm and by $(\cdot,\cdot)$ the $L^2$ inner product, we have
\be
\|u\|^2\|v\|^2=|(u,v)|^2+\|z\|^2\|v\|^2,
\la{cs}
\ee
where, assuming $|u|>0$ and $v\neq 0$ in $L^2$,
\be
z=u-\fr{(u,v)}{(v,v)}v=u\left(1-\fr{(u,v)}{(v,v)}\fr{v}{u}\right).
\ee
Setting $u=e^{{\Phi_\epsilon^*}/{2}}$ and $v=e^{-{\Phi_\epsilon^*}/{2}}$, (\ref{cs}) together with the Cauchy-Schwarz inequality gives 
\be
\begin{aligned}
\int_\Omega e^{\Phi_\epsilon^*}\,dx\int_\Omega e^{-\Phi_\epsilon^*}\,dx&=|\Omega|^2+\int_\Omega e^{\Phi_\epsilon^*}\left(1-\fr{e^{-\Phi_\epsilon^*}}{A_1(\epsilon)}\right)^2\,dx\int_\Omega e^{-\Phi_\epsilon^*}\,dx\\
&\ge |\Omega|^2 +\left(\int_\Omega \left|1-\fr{e^{-\Phi_\epsilon^*}}{A_1(\epsilon)}\right|\,dx\right)^2.
\end{aligned}
\ee
Then, since the left hand side converges to $|\Omega|^2$ by \eqref{limprod}, we obtain the conclusion (\ref{astozero}) for $i=1$. The $i=2$ case is obtained analogously by switching $u, v$ in the definition of $z$. 

From (\ref{astozero}) we have
\be
\lim_{\epsilon\to 0}\|\rho_\epsilon^*\|_{L^1}\le\fr{1}{|\Omega|}\lim_{\epsilon\to 0}\left(\left\|1-\fr{e^{-\Phi_\epsilon^*}}{A_1(\epsilon)}\right\|_{L^1}+\left\|1-\fr{e^{\Phi_\epsilon^*}}{A_2(\epsilon)}\right\|_{L^1}\right)=0,
\ee
and thus, (\ref{rhol1}) holds.

\noindent{\bf Step 4.} We prove bounds on $\Phi_\epsilon^*$ uniform in $\epsilon$. More precisely, for all $\epsilon>0$ and $x\in\Omega$, we have 
\be
\inf W\le \Phi_\epsilon^*(x)\le\sup W.
\la{max}
\ee

\noindent Indeed, suppose $\Phi_\epsilon^*$ attains an interior global maximum value exceeding $\sup W$, say at $x_0\in\Omega$. Then at $x_0$ we must have $\D\Phi_\epsilon^*\le 0$, so that
\be
\fr{e^{-\Phi_\epsilon^*(x_0)}}{\int_\Omega e^{-\Phi_\epsilon^*}{dx}} -\fr{e^{\Phi_\epsilon^*(x_0)}}{\int_\Omega e^{\Phi_\epsilon^*}{dx}}\ge  0
\la{ineq}
\ee
Rearranging the terms in \eqref{ineq}, we obtain
\be
\begin{aligned}
\left(e^{\Phi_\epsilon^*(x_0)}\right)^2 
&\le \fr{\int_\Omega e^{\Phi_\epsilon^*}{dx}}{\int_\Omega e^{-\Phi_\epsilon^*}{dx}}\\
&\le|\Omega |^{-2}\left(\int_\Omega e^{\Phi_\epsilon^*} dx\right)^2
\end{aligned}
\ee
where the second inequality follows from
\be
|\Omega|^2 \le\int_\Omega e^{\Phi_\epsilon^*}{dx}\int_\Omega e^{-\Phi_\epsilon^*}{dx}.
\la{cslb}
\ee
Thus, $\Phi^*_{\epsilon}$ must be constant, but this is a contradiction, because $\Phi_\epsilon^*(x_0)>\sup W$. The upper bound is proved and the lower bound is proved analogously. 

\noindent{\bf Step 5.} We have the $L^2$  convergence
\be
\lim_{\epsilon\to 0}\|\rho_\epsilon^*\|_{L^2}=0.
\la{l2toz}
\ee

\noindent Indeed, we obtain
\be
\lim_{\epsilon\to 0}\left\|1-\fr{e^{-z_i\Phi_\epsilon^*}}{A_i(\epsilon)}\right\|_{L^2}=0
\ee
for $i=1,2$, $z_1=1,\,z_2=-1$, from  (\ref{astozero}), because 
\be
\int_\Omega\left|1-\fr{e^{-z_i\Phi_\epsilon}}{A_i(\epsilon)}\right|^2\,dx\le \left\|1-\fr{e^{-z_i\Phi_\epsilon}}{A_i(\epsilon)}\right\|_{L^1}\left\|1-\fr{e^{-z_i\Phi_\epsilon}}{A_i(\epsilon)}\right\|_{L^\infty}
\ee
and, because  (\ref{max}) yields a uniform bound of the $L^\infty$ norm independent of $\epsilon$.

\noindent{\bf Step 6.} The map $x\mapsto (\rho_\epsilon^*(x))^2$ is subharmonic.

\noindent Recalling (\ref{rhostarepsi}), 
\be
\rho_\epsilon^*=I_0\left(\fr{e^{-\Phi^*_\epsilon}}{\int_\Omega e^{-\Phi^*_\epsilon}\,dx}-\fr{e^{\Phi^*_\epsilon}}{\int_\Omega e^{\Phi^*_\epsilon}\,dx}\right),
\la{rhoo}
\ee
a direct computation gives
\be
\D\rho_\epsilon^*=\rho_\epsilon^*\left(\fr{I_0}{\epsilon|\Omega|}\left(\fr{e^{-\Phi_\epsilon^*}}{A_1(\epsilon)}+\fr{e^{\Phi_\epsilon^*}}{A_2(\epsilon)}\right)+|\na\Phi_\epsilon^*|^2\right).
\ee
Thus, $\rho_\epsilon^*\D\rho_\epsilon^*\ge 0$, from which it follows that $\D(\rho_\epsilon^*)^2=2\rho_\epsilon^*\D\rho_\epsilon^*+2|\na\rho_\epsilon^*|^2\ge 0$.

We conclude now the proof of Theorem~\ref{pb1}.  As in the proof of Theorem~\ref{pb2}, we fix $x_0\in K$ and consider the ball $B=B_r$ centered at $x_0$ with radius $r=d(\pa\Omega,\pa K)$. By subharmonicity, we have
\be
\|\rho_\epsilon^*\|_{L^2}^2=\int_\Omega (\rho_{\epsilon}^*(x))^2\,dx\ge|B|(\rho_{\epsilon}^*(x_0))^2.
\la{conc}
\ee
Thus in light of (\ref{l2toz}), we obtain the desired conclusion (\ref{limrhostarepsil}).
\end{proof}

Finally, we consider the two remaining cases of (\textbf{US}) boundary conditions.
\begin{thm}\la{pb3}
Let $Z_1>0 $ and $I_2>0$ be given and let $\Phi_\epsilon^*$ be the unique solution of the Poisson-Boltzmann equation
\be
-\epsilon\D\Phi^*_\epsilon = \rho^*_\epsilon
\la{pbunif2}
\ee
with
\be
\rho^*_{\epsilon} = \fr{e^{-\Phi^*_\epsilon}}{Z_1}-I_2\fr{e^{\Phi^*_\epsilon}}{\int_\Omega e^{\Phi^*_\epsilon}\,dx}
\la{rhostarepsilo}
\ee
and with boundary condition ${\Phi_\epsilon^*}_{|\partial\Omega}=W$. Then for each compact $K\subset\Omega$, we have
\be
\lim_{\epsilon\to 0} \sup_{x\in K}\left|\rho_\epsilon^*(x)\right| = 0.
\la{limrhostarepsilon}
\ee
\end{thm}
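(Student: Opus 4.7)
The structure of (\ref{pbunif2})--(\ref{rhostarepsilo}) mixes the blocking-type logarithmic energy of Theorem~\ref{pb1} (from the anion term carrying the $\int_\Omega e^{\Phi_\epsilon^*}$ factor) with the selective-type pointwise energy of Theorem~\ref{pb2} (from the cation term). My plan is to adapt both techniques in sequence: first locate the interior limit by a variational argument, then establish an $\epsilon$-independent $L^\infty$ bound on $\Phi_\epsilon^*$, then obtain $L^2$ convergence of $\rho_\epsilon^*$, and finally upgrade to uniform convergence on compacts via subharmonicity of $(\rho_\epsilon^*)^2$. The natural functional is
\begin{equation}
F_\epsilon[\psi]=\fr{\epsilon}{2}\int_\Omega|\na\psi|^2\,dx+\int_\Omega\fr{e^{-\psi}}{Z_1}\,dx+I_2\log\int_\Omega e^{\psi}\,dx,
\end{equation}
on the admissible set $\mathcal{A}$ from Theorem~\ref{pb2}; $\Phi_\epsilon^*$ is its unique minimizer, convexity of the log-integral term following from H\"older's inequality. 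The candidate interior limit is the constant $Z=\log(|\Omega|/(I_2Z_1))$.

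For the variational step, Jensen's inequality applied to $\log$ gives $I_2\log\int_\Omega e^\psi\ge I_2\log|\Omega|+(I_2/|\Omega|)\int_\Omega\psi$, so
\begin{equation}
F_\epsilon[\psi]\ge \int_\Omega h(\psi)\,dx+I_2\log|\Omega|,\qquad h(y):=\fr{e^{-y}}{Z_1}+\fr{I_2}{|\Omega|}y.
\end{equation}
Since $h$ is strictly convex with unique minimum at $y=Z$, this yields $F_\epsilon[\Phi_\epsilon^*]\ge M:=I_2(1+Z+\log|\Omega|)$. The matching upper bound follows as in Step~1 of Theorem~\ref{pb2} by choosing boundary-layer test functions $\psi_\delta\in\mathcal{A}$ equal to $Z$ on $\Omega_\delta$, with $|\na\psi_\delta|\sim\delta^{-1}$ and $|\psi_\delta|\le|Z|+\sup|W|$; taking $\delta=\epsilon^{1/2}$ and applying dominated convergence gives $\limsup F_\epsilon[\Phi_\epsilon^*]\le M$. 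Next I would prove an $\epsilon$-independent $L^\infty$ bound on $\Phi_\epsilon^*$ by the maximum-principle argument of Step~4 of Theorem~\ref{pb1}: at an interior maximum $x_0$, $\rho_\epsilon^*(x_0)\ge 0$ rearranges (using $\int_\Omega e^{\Phi_\epsilon^*}\le|\Omega|e^{\Phi_\epsilon^*(x_0)}$) into $\Phi_\epsilon^*(x_0)\le Z$, and a symmetric argument at the minimum gives $\min(Z,\inf W)\le\Phi_\epsilon^*\le\max(Z,\sup W)$.

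With this uniform bound, strict convexity of $h$ on the compact range yields $h(y)-h(Z)\ge c(y-Z)^2$, and the chain of inequalities that produced $M$ gives the coercive estimate
\begin{equation}
F_\epsilon[\Phi_\epsilon^*]-M\ge c\|\Phi_\epsilon^*-Z\|_{L^2}^2\to 0,
\end{equation}
so $\Phi_\epsilon^*\to Z$ in $L^2(\Omega)$. Because $\rho_\epsilon^*$ is Lipschitz in $\Phi_\epsilon^*$ on the uniform range and the nonlocal factor $A_\epsilon=\int_\Omega e^{\Phi_\epsilon^*}$ converges to $|\Omega|e^Z$, this implies $\rho_\epsilon^*\to 0$ in $L^2(\Omega)$. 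The final step is to verify that $(\rho_\epsilon^*)^2$ is subharmonic, exactly as in Step~6 of Theorem~\ref{pb1}: differentiating (\ref{rhostarepsilo}) gives $\na\rho_\epsilon^*=-B_\epsilon\na\Phi_\epsilon^*$ with $B_\epsilon:=e^{-\Phi_\epsilon^*}/Z_1+I_2e^{\Phi_\epsilon^*}/A_\epsilon>0$, and a direct computation using $-\epsilon\D\Phi_\epsilon^*=\rho_\epsilon^*$ yields $\rho_\epsilon^*\D\rho_\epsilon^*=(\rho_\epsilon^*)^2(|\na\Phi_\epsilon^*|^2+B_\epsilon/\epsilon)\ge 0$. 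The mean-value inequality on the ball $B_r(x_0)\subset\Omega$ with $r=d(\pa\Omega,\pa K)$ then gives $(\rho_\epsilon^*(x_0))^2\le|B_r|^{-1}\|\rho_\epsilon^*\|_{L^2}^2\to 0$, uniformly for $x_0\in K$. The chief obstacle I anticipate is ensuring that the Jensen gap is chained cleanly with the pointwise coercivity of $h$; the $L^\infty$ maximum-principle bound is precisely what quantitatively decouples these two inequalities, so I would make sure to establish it before attempting the coercive estimate.
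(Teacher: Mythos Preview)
Your proposal is correct and in fact streamlines the paper's argument considerably. The overall architecture---variational limit, uniform $L^\infty$ control, $L^2$ convergence of $\rho_\epsilon^*$, then upgrade via subharmonicity of $(\rho_\epsilon^*)^2$---is the same as the paper's, but your route to the uniform bound is markedly shorter. The paper introduces an auxiliary shifted problem $\phi_\epsilon^*=\Phi_\epsilon^*+w$ with $w$ chosen so that $\tilde Z_1^{-1}>I_2/|\Omega|$; this sign condition is used to bound $\int_\Omega e^{-\phi_\epsilon^*}$ (their Step~4), which in turn feeds into a Cauchy--Schwarz step $|\Omega|^2\le\int e^{\phi_\epsilon^*}\int e^{-\phi_\epsilon^*}$ to obtain the lower bound on $\phi_\epsilon^*$ (their Step~5), and only at the end (Step~11) do they verify $\tilde\rho_\epsilon^*=\rho_\epsilon^*$. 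You bypass all of this by observing that at an interior extremum the trivial inequality $\int_\Omega e^{\Phi_\epsilon^*}\ge|\Omega|e^{\min\Phi_\epsilon^*}$ (resp.\ $\le|\Omega|e^{\max\Phi_\epsilon^*}$) combines directly with the sign of $\rho_\epsilon^*$ to pin the extremum to $Z$; this yields the two-sided bound $\min(Z,\inf W)\le\Phi_\epsilon^*\le\max(Z,\sup W)$ in one stroke and makes the auxiliary shift unnecessary. The paper's approach, by contrast, separates the estimate of the nonlocal Jensen gap (their Step~6, via a Taylor expansion of $\log$) from the pointwise coercivity of $K$ (their Steps~7--8); your version folds both into the single quadratic lower bound $h(y)-h(Z)\ge c(y-Z)^2$ on the uniform range, which is cleaner once the $L^\infty$ bound is already in hand. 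Both approaches converge on the same subharmonicity computation and mean-value endgame.
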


\begin{proof} To prove Theorem~\ref{pb3}, it is useful to consider first the auxiliary problem of solving 
\be
-\epsilon\D \phi_\epsilon^*= \tilde{\rho}_\epsilon^*
\la{aux}
\ee
with
\be
\tilde{\rho}_{\epsilon} = \fr{e^{-\phi_\epsilon^*}}{\tilde Z_1}-I_2\fr{e^{\phi_\epsilon^*}}{\int_\Omega e^{\phi_\epsilon^*}\,dx}
\la{tilderho}
\ee
with boundary conditions
\be
\phi_\epsilon^*(x)_{|\pa\Omega}=\tilde{W}(x)=W(x)+w
\la{auxbc}
\ee
with $w>0$ a constant to be specified, and $\tilde Z_1$ defined by 
\be
\log {\tilde Z_1}^{-1}=\log\gamma_1+W+w=\log {Z_1}^{-1}+w.
\ee
We fix $w>0$ large enough so that 
\be
\fr{1}{\tilde Z_1}-\fr{I_2}{|\Omega|}>0.
\la{sign}
\ee
We emphasize that such a choice of $w$ depends on $\gamma_1$, $W$, $I_2$ and $|\Omega|$, but does not depend on $\epsilon$. The condition (\ref{sign}) is used below in the proof of (\ref{minusup}).

The unique solution of (\ref{aux})--(\ref{auxbc}) is the minimizer of the functional
\be
H_\epsilon[\psi]=\int_\Omega \left(\fr{\epsilon}{2}|\na\psi|^2+\fr{e^{-\psi}}{\tilde Z_1}\right)\,dx
+I_2\log\int_\Omega e^\psi\,dx
\la{H}
\ee
on the set $\tilde{\mathcal A}=\{\psi\in H^1(\Omega)\,|\, e^{\psi},e^{-\psi}\in L^1(\Omega),\,\psi_{|\pa\Omega}=\tilde W\}$.

For $H_\epsilon[\cdot]$, we observe that from Jensen's inequality, we have the lower bound
\be
H_\epsilon[\psi]\ge\int_\Omega \left(\fr{e^{-\psi}}{\tilde Z_1}+\fr{I_2}{|\Omega|}\psi\right)\,dx
+I_2\log|\Omega|
\la{jen}
\ee
so that defining
\be
K(y)=\fr{e^{-y}}{\tilde Z_1}+\fr{I_2}{|\Omega|}y,
\ee
we can write
\be
H_\epsilon[\psi]\ge\int_\Omega K(\psi)\,dx+I_2\log|\Omega|.
\la{jen'}
\ee
The function $K(y)$ attains its unique global minimum at $y=Z'$ where
\be
Z'=\log\fr{|\Omega|}{I_2\tilde Z_1}.
\la{zprime}
\ee

\noindent{\bf Step 1.} We have the limit
\be
\lim_{\epsilon\to 0}\min_{\psi\in\mathcal A}H_\epsilon[\psi]=\lim_{\epsilon\to 0}H_\epsilon[\phi_\epsilon^*]=K(Z')|\Omega|+I_2\log|\Omega|.
\la{Hlim}
\ee

\noindent We use a similar argument as in the proof of (\ref{limm2}). The lower bound corresponding to (\ref{lbb}) is given by
\be
H_\epsilon[\psi]\ge\int_\Omega K(\psi)\,dx+I_2\log|\Omega|\ge K(Z')|\Omega|+I_2\log|\Omega|.
\ee
The upper bound corresponding to (\ref{ubb}) is given by considering test functions $\psi_\delta\in\tilde{\mathcal{A}}$ satisfying 1) $\psi_\delta-Z'$ is supported in $\Omega\backslash \Omega_\delta$, 2) $|\na\psi_\delta|\sim \mathcal{O}(\delta^{-1})$, and 3) $|\psi_\delta|\le |Z'|+ \sup |\tilde W|$.

\noindent{\bf Step 2.} We have
\be
\limsup_{\epsilon\to 0}\int_\Omega e^{\phi^*_\epsilon}\,dx<\infty.
\la{intephup}
\ee
\noindent Indeed, by definition, we get
\be
H_\epsilon[\phi_\epsilon^*]
=\int_\Omega \left(\fr{\epsilon}{2}|\na\phi_\epsilon^*|^2+\fr{e^{-\phi_\epsilon^*}}{\tilde Z_1}\right)\,dx
+I_2\log\int_\Omega e^{\phi_\epsilon^*}\,dx\ge I_2\log\int_\Omega e^{\phi_\epsilon^*}\,dx
\ee
so the conlusion follows from (\ref{Hlim}).

\noindent{\bf Step 3.} We claim a uniform upper bound on $\phi_\epsilon^*$
\be
\limsup_{\epsilon\to 0}\sup_{x\in\Omega}\phi_{\epsilon}^*(x) <\infty.
\la{supphi}
\ee

\noindent The estimate
\be
e^{\phi_\epsilon^*}\le\max\left\{\left(\fr{1}{I_2\tilde Z_1}\int_\Omega e^{\phi_\epsilon^*} dx \right)^\fr{1}{2},\,e^{\sup \tilde W}\right\}
\la{ephup}
\ee
follows from a maximum principle argument. Indeed, if $\sup_\Omega\phi_\epsilon^*=\sup_{\pa\Omega}\phi_\epsilon^*$, then there is nothing to prove. So we may assume $\sup_\Omega\phi_\epsilon^*>\sup_{\pa\Omega}\phi_\epsilon^*$. Let $x_0\in\Omega$ be a point where the maximum value is attained. At this point we have $\D\phi^*_\epsilon\le 0$ and therefore $\tilde\rho_\epsilon^*\ge 0$. That is,
\be
\fr{e^{-\phi_\epsilon^*(x_0)}}{\tilde Z_1}-I_2\fr{e^{\phi_\epsilon^*(x_0)}}{\int_\Omega e^{\phi_\epsilon^*}\,dx}\ge 0,
\ee
thus, 
\be
\fr{1}{I_2\tilde Z_1}\int_\Omega e^{\phi_\epsilon^*}\,dx
\ge e^{2\phi_\epsilon^*(x_0)} 
\ge e^{2\phi_\epsilon^*},
\ee
which yields (\ref{ephup}). Now, from (\ref{ephup}) and (\ref{intephup}), we deduce (\ref{supphi}).

\noindent{\bf Step 4.} We have
\be
\limsup_{\epsilon\to 0}\int_\Omega e^{-\phi_\epsilon^*}\,dx<\infty.
\la{minusup}
\ee 

\noindent From (\ref{Hlim}) and (\ref{jen}), we obtain that there exists a constant $C$ independent of $\epsilon$ such that for all $\epsilon$ small enough,
\be
\int_\Omega \left(\fr{e^{-\phi_\epsilon^*}}{\tilde Z_1}+\fr{I_2}{|\Omega|}\phi_\epsilon^*\right)\,dx<C.
\ee
Then, because $|\phi_\epsilon^*|\le \exp|\phi_\epsilon^*|$, we have
\be
\int_\Omega \left(\fr{e^{-\phi_\epsilon^*}}{\tilde Z_1}-\fr{I_2}{|\Omega|}e^{|\phi_\epsilon^*|}\right)\,dx<C
\ee
from which it follows that
\be
\bal
\int_\Omega \fr{e^{-\phi_\epsilon^*}}{\tilde Z_1}\,dx-\int_{\phi_\epsilon^*<0}\fr{I_2}{|\Omega|}e^{-\phi_\epsilon^*}\,dx
<C+\int_{\phi_\epsilon^*\ge 0}\fr{I_2}{|\Omega|}e^{\phi_\epsilon^*}\,dx
<C'
\eal
\ee
where $C'$ is independent of $\epsilon$, due to (\ref{intephup}). 
The proof of (\ref{minusup}) is concluded by noting that the left hand side is bounded below by
\be
\left(\fr{1}{\tilde Z_1}-\fr{I_2}{|\Omega|}\right)\int_\Omega e^{-\phi_\epsilon^*}\,dx
\ee
and recalling (\ref{sign}).

\noindent{\bf Step 5.} We have
\be
\limsup_{\epsilon\to 0}\sup_{x\in \Omega}|\phi_\epsilon^*(x)|<\infty.
\la{phibound}
\ee

\noindent We claim that

\be
e^{-\phi_\epsilon^*}\le\max\left\{\left(\fr{I_2\tilde Z_1}{|\Omega|^2}\int_\Omega e^{-\phi_\epsilon^*}{dx} \right)^\fr{1}{2},\,e^{-\inf \tilde W}\right\}
\ee
holds. Indeed, we may ssume without loss of generality that $\inf_\Omega {\phi_\epsilon^*}<\inf_{\pa\Omega}\phi_\epsilon^*$. For $x_0\in\Omega$ such that $\phi_\epsilon^*(x_0)=\inf_\Omega \phi_\epsilon^*$, we know that $0\ge-\epsilon\D\phi_\epsilon^*(x_0)=\tilde\rho_\epsilon^*(x_0)$. That is,
\be
\fr{e^{-\phi_\epsilon^*(x_0)}}{\tilde Z_1}-I_2\fr{e^{\phi_\epsilon^*(x_0)}}{\int_\Omega e^{\phi_\epsilon^*}\,dx}\le 0
\ee
which, after rearranging and using Cauchy-Schwarz, gives
\be
e^{-2\phi_\epsilon^*}\le e^{-2\phi_\epsilon^*(x_0)}\le I_2\tilde Z_1\fr{1}{\int_\Omega e^{\phi_\epsilon^*}dx}\le\fr{I_2\tilde Z_1}{|\Omega|^2}\int_\Omega e^{-\phi_\epsilon^*} dx.
\ee
In view of (\ref{minusup})  we have that $e^{-\phi_\epsilon^*}$ (and hence $-\phi_\epsilon^*$) is uniformly bounded from above for all $\epsilon$ small enough. Together with (\ref{supphi}) we obtain (\ref{phibound}).

\noindent{\bf Step 6.} We claim
\be
\lim_{\epsilon\to 0}\left\|1-\fr{e^{\phi_\epsilon^*}}{\fr{1}{|\Omega|}\int_\Omega e^{\phi_\epsilon^*}dx}\right\|_{L^2}=0.
\la{l2conv}
\ee

\noindent We denote
\be
A_\epsilon= \fr{1}{|\Omega|}\int_\Omega e^{\phi_\epsilon^*}dx.
\ee
Doing a Taylor expansion of $\log x$ around $A_\epsilon$ and evaluating at $x=e^{\phi_\epsilon^*}$, we have
\be
\phi_\epsilon^*=\log e^{\phi_\epsilon^*}=\log A_\epsilon+\fr{1}{A_\epsilon}(e^{\phi_\epsilon^*}-A_\epsilon)-\fr{1}{2\xi^2}(e^{\phi_\epsilon^*}-A_\epsilon)^2
\ee
where $\xi>0$ is a value in between $e^{\phi_\epsilon^*}$ and $A_\epsilon$. In particular,
\be
\xi\le\max\{e^{\phi_\epsilon^*},A_\epsilon\}.
\ee
Thus, if view of the uniform bounds (\ref{phibound}) on $\phi_\epsilon^*$, we obtain that $\xi\le C$ for some $C>0$ independent of $\epsilon$, for all $\epsilon$ small enough. Therefore, for small $\epsilon$,
\be
\log A_\epsilon\ge \phi_\epsilon^*-\fr{1}{A_\epsilon}(e^{\phi_\epsilon^*}-A_\epsilon)+\fr{1}{2C^2}(e^{\phi_\epsilon^*}-A_\epsilon)^2.
\ee
Integrating and using the definition of $A_\epsilon$, we obtain
\be
\log A_\epsilon\ge\fr{1}{|\Omega|}\left(\int_\Omega \phi_\epsilon^* {dx} +\fr{1}{2C^2}\int_\Omega (e^{\phi_\epsilon^*}-A_\epsilon)^2{dx}\right).
\ee
Then we compute,
\be
\bal
H_\epsilon[\phi_\epsilon^*]\ge&\int_\Omega\fr{e^{-\phi_\epsilon^*}}{\tilde Z_1}\,dx+I_2\log\int_\Omega e^{\phi_\epsilon^*}\,dx\\
\ge&\int_\Omega\fr{e^{-\phi_\epsilon^*}}{\tilde Z_1}+\fr{I_2}{|\Omega|}\phi_\epsilon^*\,dx+\fr{I_2}{2C^2|\Omega|}\int_\Omega(e^{\phi_\epsilon^*}-A_\epsilon)^2{dx} + I_2\log|\Omega|\\
\ge&K(Z')|\Omega|+I_2\log|\Omega|+\fr{I_2}{2C^2|\Omega|}\int_\Omega (e^{\phi_\epsilon^*}-A_\epsilon)^2{dx}.
\eal
\ee
By (\ref{Hlim}), the left hand side converges to $K(Z')|\Omega|+I_2\log|\Omega|$ in the limit $\epsilon\to 0$. It follows that $e^{\phi_\epsilon^*}-A_\epsilon\to 0$ in $L^2$. Furthermore, due to the uniform bounds
(\ref{phibound}) on $\phi_\epsilon^*$, we obtain $1-{e^{\phi_\epsilon^*}}/{A_\epsilon}\to 0$ in $L^2$ in the limit $\epsilon\to 0$, thus completing the proof of (\ref{l2conv}).

\noindent{\bf Step 7.} We have
\be
\lim_{\epsilon\to 0}\left\|K(\phi_\epsilon^*)-K(Z')\right\|_{L^1} = 0.
\la{l1conv}
\ee

\noindent From the lower bound (\ref{jen'}) and the bound $K(\phi_\epsilon^*)\ge K(Z')$, we deduce that
\be
H_\epsilon[\phi_\epsilon^*]-(K(Z')|\Omega|+I_2\log|\Omega|)\ge\int_\Omega \left(K(\phi_\epsilon^*)-K(Z')\right)\,dx
\ee
and from (\ref{Hlim}), the left hand side converges to $0$ and thus (\ref{l1conv}) follows. 

\noindent{\bf Step 8.} We establish
\be
\lim_{\epsilon\to 0}\|\phi_\epsilon^*-Z'\|_{L^2}=0.
\la{phitoz}
\ee

\noindent As in the proof of (\ref{l2conv}), we consider the Taylor expansion of $K$,
\be
K(\phi_\epsilon^*)=K(Z')+K'(Z')(\phi_\epsilon^*-Z')+\fr{K''(\xi)}{2}(\phi_\epsilon^*-Z')^2
\ee
where $\xi$ is a value in between $Z'$ and $\phi_\epsilon^*$. In particular, since $K''(y)={e^{-y}}/{\tilde Z_1}$, we have 
\be
K''(\xi)\ge \min\left(\fr{e^{-\phi_\epsilon^*}}{\tilde Z_1},\fr{e^{-Z'}}{\tilde Z_1}\right).
\ee
Then, by the uniform bounds (\ref{phibound}) on $\phi_\epsilon^*$, we conclude that $K''(\xi)\ge C$ for some $C>0$ independent of $\epsilon$, for all $\epsilon$ small enough. Thus, recalling that $K'(Z')=0$, we obtain for small $\epsilon$,
\be
K(\phi_\epsilon^*)-K(Z')\ge\fr{C}{2}(\phi_\epsilon^*-Z')^2,
\la{kk}
\ee
and  (\ref{phitoz}) follows from (\ref{l1conv}) upon integrating (\ref{kk}) over $\Omega$ and taking the limit $\epsilon\to 0$.

\noindent{\bf Step 9.}  We have
\be
\lim_{\epsilon\to 0}\left\|\tilde\rho_\epsilon^*\right \|_{L^2}=0.
\la{tilderhotozero}
\ee

\noindent We recall from (\ref{zprime})
\be
\fr{I_2}{|\Omega|} =\fr{e^{-Z'}}{\tilde Z_1}
\la{expzprime}
\ee
and thus, in view of 
\begin{align*}
K(y)=\fr{e^{-y}}{\tilde Z_1}+\fr{I_2}{|\Omega|}y,
\end{align*}
we get 
\be
\fr{e^{-\phi^*_\epsilon}}{\tilde Z_1}-\fr{I_2}{|\Omega|}
=K(\phi^*_\epsilon)-K(Z')-\fr{I_2}{|\Omega|}(\phi_\epsilon^*-Z').
\la{triangle}
\ee
Therefore,
\be
\left\|\fr{e^{-\phi^*_\epsilon}}{\tilde Z_1}-\fr{I_2}{|\Omega|}\right\|_{L^1}\le \|K(\phi_\epsilon^*)-K(Z')\|_{L^1}+\fr{I_2}{|\Omega|}\|\phi_\epsilon^*-Z'\|_{L^1}.
\ee
Then (\ref{phitoz})  together with (\ref{l1conv}) gives 
\be
\lim_{\epsilon\to 0}\left\|\fr{e^{-\phi^*_\epsilon}}{\tilde Z_1}-\fr{I_2}{|\Omega|}\right\|_{L^1}=0.
\la{l11}
\ee
Next, since 
\be
\tilde\rho_\epsilon^*=\left(\fr{e^{-\phi_\epsilon^*}}{\tilde Z_1}-\fr{I_2}{|\Omega|}\right)+\left(\fr{I_2}{|\Omega|}-I_2\fr{e^{\phi_\epsilon^*}}{\int_\Omega e^{\phi_\epsilon^*}\,dx}\right),
\ee 
by the  triangle inequality, we obtain
\be
\bal
\|\tilde\rho_\epsilon^*\|_{L^2}&\le \left\|\fr{e^{-\phi^*_\epsilon}}{\tilde Z_1}-\fr{I_2}{|\Omega|}\right\|_{L^2}+\fr{I_2}{|\Omega|}\left\|1-\fr{e^{\phi_\epsilon^*}}{\fr{1}{|\Omega|}\int_\Omega e^{\phi_\epsilon^*}dx}\right\|_{L^2}\\
&\le  \left\|\fr{e^{-\phi^*_\epsilon}}{\tilde Z_1}-\fr{I_2}{|\Omega|}\right\|_{L^\infty}^\fr{1}{2}\left\|\fr{e^{-\phi^*_\epsilon}}{\tilde Z_1}-\fr{I_2}{|\Omega|}\right\|_{L^1}^\fr{1}{2}+\fr{I_2}{|\Omega|}\left\|1-\fr{e^{\phi_\epsilon^*}}{\fr{1}{|\Omega|}\int_\Omega e^{\phi_\epsilon^*}dx}\right\|_{L^2}.
\eal
\ee
Then the conclusion (\ref{tilderhotozero}) follows from the uniform bounds on $\phi_\epsilon^*$,  (\ref{l2conv}) and (\ref{l11}).

\noindent{\bf Step 10.} The map $x\mapsto (\tilde\rho_\epsilon^*(x))^2$ is subharmonic.

\noindent Indeed, from the definition (\ref{tilderho}) and the equation (\ref{aux}) it follows that
\be
\Delta\tilde\rho_{\epsilon}^* = \tilde\rho_{\epsilon}^*\left [|\na \phi^*_{\epsilon}|^2 + \fr{1}{\epsilon}\left(\fr{e^{-\phi_\epsilon^*}}{\tilde Z_1} + I_2\fr{e^{\phi_\epsilon^*}}{\int_\Omega e^{\phi_\epsilon^*}\,dx}
 \right)\right]
\la{deltarhotildestar}
\ee
and subharmonicity is deduced from $\Delta(\tilde\rho_{\epsilon}^*)^2 \ge 2\tilde\rho_{\epsilon}^* \Delta\tilde\rho_{\epsilon}^*$.

\noindent{\bf Step 11.} We claim that $\rho_\epsilon^*=\tilde\rho_\epsilon^*$. 

\noindent To prove the claim, we define $\Psi_\epsilon^*=\phi_\epsilon^*-w$. Then, we compute
\be
\bal
-\epsilon\D\Psi_\epsilon^*=-\epsilon\D\phi_\epsilon^*&=\fr{e^{-\phi_\epsilon^*}}{\tilde Z_1}-I_2\fr{e^{\phi_\epsilon^*}}{\int_\Omega e^{\phi_\epsilon^*}\,dx}\\
&=\fr{e^{-(\phi_\epsilon^*-w)}}{Z_1}-I_2\fr{e^{\phi_\epsilon^*-w}}{\int_\Omega e^{\phi_\epsilon^*-w}\,dx}\\
&=\fr{e^{-\Psi_\epsilon^*}}{ Z_1}-I_2\fr{e^{\Psi_\epsilon^*}}{\int_\Omega e^{\Psi_\epsilon^*}\,dx}
\la{bla}
\eal
\ee
where the second line follows from multiplying the first fraction by ${e^w}/{e^w}$ and the second by ${e^{-w}}/{e^{-w}}$. On the other hand, we have that $(\Psi_\epsilon^*)_{|\pa\Omega}=W$. Thus, by uniqueness of solutions of the Dirichlet problem (\ref{pbunif2}), we conclude that $\Psi_\epsilon^*=\Phi_\epsilon^*$, and the first line of (\ref{bla}) implies that $\rho_\epsilon^*=\tilde\rho_\epsilon^*$. 

The proof of Theorem~\ref{pb3} using steps 9, 10 and 11 is concluded following the same reasoning as in the proof of Theorem~\ref{pb1}, namely using (\ref{tilderhotozero}) and (\ref{conc}). 

\end{proof}

\begin{thm}\la{pb4}
Let $Z_2>0$ an $I_1>0$ be given, and let  $\Phi_\epsilon^*$ be the unique solution of the Poisson-Boltzmann equation
\be
-\epsilon\D\Phi^*_\epsilon = \rho^*_\epsilon
\la{pbunif3}
\ee
with
\be 
\rho^*_{\epsilon} = I_1\fr{e^{-\Phi^*_\epsilon}}{\int_\Omega e^{-\Phi_\epsilon^*}\,dx}-\fr{e^{\Phi^*_\epsilon}}{Z_2} 
\la{rhostarepsilon}
\ee
and with boundary condition ${\Phi_\epsilon^*}_{|\partial\Omega}=W$. Then for each compact $K\subset \Omega$, we have
\be
\lim_{\epsilon\to 0} \sup_{x\in K}\left|\rho_\epsilon^*(x)\right| = 0.
\la{limrhostarepsilonn}
\ee
\end{thm}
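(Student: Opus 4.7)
The plan is to reduce Theorem~\ref{pb4} to Theorem~\ref{pb3} by exploiting the discrete symmetry $\Phi\mapsto -\Phi$ of the Poisson-Boltzmann system, which exchanges the roles of the cationic and anionic species. Concretely, I would set
\[
\tilde\Phi_\epsilon^*(x) = -\Phi_\epsilon^*(x),
\]
so that the boundary condition becomes $\tilde\Phi_\epsilon^*|_{\pa\Omega} = -W$ and a direct computation gives
\[
-\epsilon\D \tilde\Phi_\epsilon^* = \epsilon\D\Phi_\epsilon^* = -\rho_\epsilon^* = \fr{e^{-\tilde\Phi_\epsilon^*}}{Z_2} - I_1\fr{e^{\tilde\Phi_\epsilon^*}}{\int_\Omega e^{\tilde\Phi_\epsilon^*}\,dx}.
\]

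Thus $\tilde\Phi_\epsilon^*$ is precisely the unique solution of the Poisson-Boltzmann problem treated in Theorem~\ref{pb3}, with the substitution $Z_1\leadsto Z_2$, $I_2\leadsto I_1$, and $W\leadsto -W$. The hypotheses of Theorem~\ref{pb3} are satisfied (the constants $Z_2,I_1$ are positive and $-W$ is smooth), so it applies directly and yields
\[
\lim_{\epsilon\to 0}\sup_{x\in K}\left|\fr{e^{-\tilde\Phi_\epsilon^*(x)}}{Z_2} - I_1\fr{e^{\tilde\Phi_\epsilon^*(x)}}{\int_\Omega e^{\tilde\Phi_\epsilon^*}\,dx}\right| = 0.
\]
Since the expression inside the absolute value equals $-\rho_\epsilon^*(x)$, the conclusion (\ref{limrhostarepsilonn}) follows immediately.

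There is no real obstacle here: the only thing to check is that the change of variables is compatible with the nonlocal normalization $\int_\Omega e^{-\Phi_\epsilon^*}\,dx$, which becomes $\int_\Omega e^{\tilde\Phi_\epsilon^*}\,dx$, exactly matching the denominator in (\ref{rhostarepsilo}). Alternatively, one could repeat verbatim the eleven-step argument of Theorem~\ref{pb3}, introducing an auxiliary shifted problem with $\tilde Z_2 = Z_2 e^{-w}$ for $w>0$ large enough that $1/\tilde Z_2 - I_1/|\Omega|>0$, defining $K(y) = e^y/\tilde Z_2 + (I_1/|\Omega|)y$ (which attains its unique minimum at $Z'' = \log(I_1\tilde Z_2/|\Omega|)$), and running the same sequence of Jensen lower bound, test-function upper bound, two-sided maximum-principle $L^\infty$ estimates, $L^2$ concentration of $e^{-\phi_\epsilon^*}/A_\epsilon$ around $1$, Taylor expansion of $K$, and finally subharmonicity of $(\tilde\rho_\epsilon^*)^2$ to pass from the $L^2$ convergence of the charge density to the pointwise convergence on compact sets. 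The symmetry-based reduction is simply a one-line shortcut that bypasses this repetition.
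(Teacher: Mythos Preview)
Your proposal is correct. The paper omits the proof of Theorem~\ref{pb4} entirely, saying only that it is ``very similar to the proof of Theorem~\ref{pb3},'' which implicitly invites the reader to rerun the eleven-step argument with the roles of $e^{\Phi}$ and $e^{-\Phi}$ interchanged. Your symmetry reduction $\tilde\Phi_\epsilon^* = -\Phi_\epsilon^*$ is a genuinely cleaner route: rather than repeating the proof, it applies Theorem~\ref{pb3} as a black box with parameters $(Z_2, I_1, -W)$ in place of $(Z_1, I_2, W)$. The computation you give is correct, including the compatibility of the nonlocal normalization, and the uniqueness of solutions to the Dirichlet problem guarantees that $\tilde\Phi_\epsilon^*$ really is the minimizer of the corresponding functional $H_\epsilon$ in Theorem~\ref{pb3}. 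This buys you the result in one line; the paper's implicit approach would require rewriting the auxiliary shift (now $w<0$ to ensure $1/\tilde Z_2 - I_1/|\Omega|>0$), reversing the signs in the maximum-principle steps, and so on---exactly the repetition you sketch in your second paragraph.
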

The proof of Theorem~\ref{pb4} is very similar to the proof of Theorem~\ref{pb3} and is omitted.
\end{section}

\begin{section}{Time Asymptotic Interior Electroneutrality}\la{taie}
In this section, we consider boundary conditions (\textbf{BL}) and (\textbf{US}) where global existence of smooth solutions and nonlinear stability of unique steady states have been established in \cite{ci} and \cite{np3d}. In these stable regimes, we show that $\rho\to 0$ uniformly on compact sets $K\subset\Omega$ in the limit of small $\epsilon$ and large time $t$.

\begin{thm} \la{blocktime} (Blocking Boundary Conditions) Let $(c_1,c_2,\Phi,u)$ be solutions of the 2D NPNS system subject to (\textbf{BL}) conditions. Assume that the initial conditions satisfy $c_i(0) \ge 0$, $c_i(0)\in H^1(\Omega)$, $i=1,2$,
$u(0)\in (H^1_0(\Omega))^2$, $\div u(0)=0$. We assume
 $\|c_1(0)\|_{L^1}=\|c_2(0)\|_{L^1}=I_0$. Then for any compact  $K\subset\Omega$ and $\delta>0$, there exists $\epsilon'=\epsilon'(K,\delta)>0$ such that for all $\epsilon\le\epsilon'$ there exists $t_{\epsilon}$ such that for all $t\ge t_\epsilon$, we have $\sup_{x\in K}|\rho(x,t)|\le\delta$. Thus,
\be
\lim_{\epsilon\to 0}\lim_{t\to\infty}\sup_{x\in K}|\rho(x,t)| =0.
\ee
\end{thm}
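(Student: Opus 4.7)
The plan is to decouple the two limits and combine them by a triangle inequality. The paper has already set up the two ingredients I need: for each fixed $\epsilon>0$ the time-asymptotic convergence of $\rho(t)$ to the Boltzmann charge density $\rho_\epsilon^*$ in $L^\infty$ (recalled in the introduction, drawn from \cite{ci} using the $H^1$ convergence together with the $L^2_t H^2_x$ uniform bounds on the concentrations and Sobolev embedding in 2D), and the uniform convergence $\rho_\epsilon^*\to 0$ on compact subsets as $\epsilon\to 0$, which is precisely the content of Theorem~\ref{pb1} specialized to the (\textbf{BL}) case with conserved masses $\|c_i(0)\|_{L^1}=I_0$.

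Given $K\subset\Omega$ compact and $\delta>0$, I would first apply Theorem~\ref{pb1} to select $\epsilon'=\epsilon'(K,\delta)>0$ such that
\be
\sup_{x\in K}|\rho_\epsilon^*(x)|\le \tfrac{\delta}{2}\qquad\text{for all }0<\epsilon\le\epsilon'.
\la{planA}
\ee
Then, for each such $\epsilon$, I would invoke the $L^\infty$ time-asymptotic convergence $\|\rho(t)-\rho_\epsilon^*\|_{L^\infty(\Omega)}\to 0$ as $t\to\infty$ to select $t_\epsilon>0$ so that
\be
\|\rho(t)-\rho_\epsilon^*\|_{L^\infty(\Omega)}\le \tfrac{\delta}{2}\qquad\text{for all }t\ge t_\epsilon.
\ee
The triangle inequality then yields $\sup_{x\in K}|\rho(x,t)|\le \delta$ for all $\epsilon\le\epsilon'$ and all $t\ge t_\epsilon$, which is exactly the claim. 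Taking $\delta\to 0$ along a sequence $\delta_k\downarrow 0$ with the associated $\epsilon_k'\downarrow 0$ and $t_{\epsilon_k}$ yields the iterated limit $\lim_{\epsilon\to 0}\lim_{t\to\infty}\sup_{x\in K}|\rho(x,t)|=0$.

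The only point that requires any care is justifying the $L^\infty$ convergence $\rho(t)\to\rho_\epsilon^*$ at fixed $\epsilon$, since \cite{ci} states it in $H^1$. To upgrade, I would use that the hypotheses $c_i(0)\in H^1$, $u(0)\in H^1_0$ (with $\|c_i(0)\|_{L^1}=I_0$) fall within the global well-posedness framework of \cite{ci}, which produces uniform-in-time bounds for $c_i$ in $L^2_t H^2_x$ (and correspondingly for $\Phi$), hence a uniform-in-time $H^2$ bound along a sequence of times. Interpolating the $H^1$ decay against the $H^2$ bound, together with the Sobolev embedding $H^s\hookrightarrow L^\infty$ valid for $s>1$ in $d=2$, gives decay of $c_i(t)-c_{i,\epsilon}^*$ in $L^\infty$ and thus of $\rho(t)-\rho_\epsilon^*$ in $L^\infty$.

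The main obstacle, such as it is, is simply this $L^\infty$ upgrade of the stability result of \cite{ci}; once that is in hand the theorem is an immediate consequence of Theorem~\ref{pb1}. No further analysis of the Navier--Stokes coupling is required in the combination step, because the stability result already incorporates $u$, and the iterated limit structure $\lim_{\epsilon\to 0}\lim_{t\to\infty}$ allows $t_\epsilon$ to depend freely on $\epsilon$, sidestepping any need for uniform-in-$\epsilon$ time decay rates.
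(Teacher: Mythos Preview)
Your proposal is correct and follows the same approach as the paper: invoke the $L^\infty$ time-asymptotic convergence $\rho(t)\to\rho_\epsilon^*$ from \cite{ci} (with the $H^1\to L^\infty$ upgrade discussed in the introduction) and combine it with Theorem~\ref{pb1} via the triangle inequality. The paper's proof is simply a terse version of exactly what you wrote.
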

\begin{proof} Under the conditions of the theorem, from \cite{ci}, we have
that for each $\epsilon>0$ there exists a unique steady Boltzmann state solving (\ref{pbblock}), (\ref{rhostarepsi})
such that
\be
\lim_{t\to \infty}\|\rho(t) -\rho^*_{\epsilon}\|_{L^{\infty}} = 0.
\la{rhominusrho}
\ee
Then the result follows from Theorem \ref{pb1}.
\end{proof}

\begin{thm}\la{ustime} (Uniformly Selective Boundary Conditions) Let  $(c_1,c_2,\Phi,u)$ be solutions of the 2D NPNS system subject to (\textbf{US}) and corresponding to initial data lying in a compact subset of $H^1(\Omega)^2 \times H_0^1(\Omega)^2$  and obeying the natural side conditions $c_i(0)\ge 0$, $\div u(0) =0$. Then for any compact  $K\subset\Omega$ and $\delta>0$, there exists $\epsilon'=\epsilon'(K,\delta)>0$ such that for all $\epsilon\le\epsilon'$ there exists $t_{\epsilon}$ such that for all $t\ge t_\epsilon$, we have $\sup_{x\in K}|\rho(x,t)|\le\delta$.
Thus,
\be
\lim_{\epsilon\to 0}\lim_{t\to\infty}\sup_{x\in K}|\rho(x,t)| =0.
\ee
\la{unif}
\end{thm}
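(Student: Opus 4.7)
The plan is to reduce the claim to the small-$\epsilon$ interior electroneutrality of the steady Boltzmann states already established in Theorems~\ref{pb2}, \ref{pb3} and \ref{pb4}, following exactly the scheme that produced Theorem~\ref{blocktime} from Theorem~\ref{pb1}.

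First I would invoke from \cite{ci} the existence, for each fixed $\epsilon>0$, of a unique stationary solution $({c_1^*}_\epsilon,{c_2^*}_\epsilon,\Phi_\epsilon^*,0)$ of the NPNS system under (\textbf{US}) boundary conditions, with ${c_i^*}_\epsilon = Z_i^{-1} e^{-z_i\Phi_\epsilon^*}$ and $\Phi_\epsilon^*$ the unique Dirichlet solution of the Poisson--Boltzmann equation associated with whichever (\textbf{US}) subcase is in effect: Theorem~\ref{pb2} when $S_1,S_2\neq\emptyset$, Theorem~\ref{pb3} when $S_1\neq\emptyset$ and $S_2=\emptyset$, and Theorem~\ref{pb4} when $S_2\neq\emptyset$ and $S_1=\emptyset$. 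In the two mixed cases the constant $I_i$ parametrizing the blocking species equals $\|c_i(0)\|_{L^1}$, which is preserved by the dynamics; compactness of the initial data in $H^1$ ensures a uniform positive lower bound on this $L^1$ norm, so the three theorems apply uniformly across the class of initial data and give
\be
\lim_{\epsilon\to 0}\sup_{x\in K}|\rho_\epsilon^*(x)|=0,
\ee
where $\rho_\epsilon^*={c_1^*}_\epsilon-{c_2^*}_\epsilon$.

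Next I would combine this with the 2D stability theory of \cite{ci}, which yields $\|c_i(t)-{c_i^*}_\epsilon\|_{H^1}\to 0$ as $t\to\infty$ at each fixed $\epsilon$. Using the $L^2_tH^2_x$ (and higher) bounds proven globally in time in \cite{ci} together with the 2D Sobolev embedding $H^2\hookrightarrow L^\infty$, this is upgraded to
\be
\lim_{t\to\infty}\|\rho(t)-\rho_\epsilon^*\|_{L^\infty}=0.
\ee
The triangle inequality
\be
\sup_{x\in K}|\rho(x,t)|\le\|\rho(t)-\rho_\epsilon^*\|_{L^\infty}+\sup_{x\in K}|\rho_\epsilon^*(x)|
\ee
then allows us, given $\delta>0$, first to choose $\epsilon'$ so small that the second term is at most $\delta/2$ for all $\epsilon\le\epsilon'$, and for each such $\epsilon$ to choose $t_\epsilon$ so that the first term is at most $\delta/2$ for all $t\ge t_\epsilon$.

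The only slightly non-routine step is the $H^1$-to-$L^\infty$ upgrade of the time-asymptotic convergence; this is standard parabolic smoothing, handled entirely by the higher-norm bounds already contained in \cite{ci}, so no new machinery beyond what is developed there and in Section~\ref{eqel} is required. The main subtlety to flag is the subcase dichotomy: one must check that across all three subcases of (\textbf{US}) the compactness hypothesis on the initial data feeds correctly into the hypotheses of Theorems~\ref{pb2}--\ref{pb4}, which in the mixed subcases amounts to the lower bound on the conserved $L^1$ mass noted above.
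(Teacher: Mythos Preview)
Your proposal is correct and follows essentially the same route as the paper: split into the three (\textbf{US}) subcases, invoke the $L^\infty$ time-convergence to the Boltzmann state from \cite{ci}, and then apply Theorems~\ref{pb2}, \ref{pb3}, \ref{pb4} respectively to handle the $\epsilon\to 0$ limit of $\rho_\epsilon^*$. The only extra content you add is the remark that compactness of the initial data in $H^1$ yields a uniform positive lower bound on the conserved $L^1$ mass; note that this is not literally true (a compact set in $H^1$ can contain the zero function), and in any case the paper treats the statement as being about a fixed solution, so no uniformity over the initial-data set is actually asserted or needed.
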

\begin{proof}
We distinguish three cases, depending on the type of boundary condition. If both $c_1$ and $c_2$ have selective boundary conditions, then there exists a unique Boltzmann state, with $Z_i$, $i=1,2$ fixed by (\ref{us}) and solving
(\ref{pbunif}) with $\rho^*$ given in (\ref{rhostareps}). By \cite{ci} the charge density of the solution of the NPNS system converges uniformly in time to $\rho^*_{\epsilon}$. Then the limit of vanishing Debye length $\epsilon\to 0$ follows from Theorem~\ref{pb2}.
If the cation concentration $c_1$ obeys selective boundary conditions and the anion concentration $c_2$ has blocking boundary conditions, that is if $S_1\neq \emptyset$ and $S_2 = \emptyset$, then the Boltzmann state is determined by constants $Z_1$ given in (\ref{us}) for $i=1$ and $I_2$ given by the integral of the initial data $I_2 = \int_{\Omega} c_2(0)dx$. The Boltzmann state obeys (\ref{pbunif2}) with $\rho^*_{\epsilon}$ given by (\ref{rhostarepsilo}).
By results in \cite{ci} it follows that for any $\epsilon>0$ the charge density $\rho$ of the NPNS solution converges uniformly in time to $\rho^*_{\epsilon}$.
The vanishing Debye length result follows then from Theorem~\ref{pb3}.
Finally, the case in which the anions have selective boundary conditions and the cations have blocking boundary conditions follows similarly, using~Theorem \ref{pb4}.
\end{proof}
 
\beg{rem} Solutions of NPNS in 3D with blocking or uniform selective boundary conditions and whose initial data are small perturbations of Boltzmann steady states, exist globally, are smooth, and converge in time to the Boltzmann steady state \cite{np3d}. The known basin of stability of the Boltzmann state depends on $\epsilon$. Theorems~\ref{pb2}--\ref{pb4} are valid in 3D and the proofs of
results corresponding to Theorems~\ref{blocktime}--\ref{ustime} are the same.
 \end{rem}

\end{section}

\begin{section}{Maximum Principle for Dirichlet Boundary Conditions}
We consider the case of Dirichlet boundary conditions, and prove bounds which are uniform in the Debye length.
\begin{thm}
We take a smooth solution $(c_1,c_2,\Phi,u)$ of the NPNS system with (\textbf{DI}) boundary conditions. We consider $d=2, 3$ and assume the initial data and boundary conditions are smooth. Then the ionic concentrations $c_i$ obey the following uniform in time bounds
\be
c_i(x,t)\le \Gamma=\max\left\{\sup_{\pa\Omega}\gamma_1,\sup_{\pa\Omega}\gamma_2,\sup_\Omega c_1(0),\sup_\Omega c_2(0)\right\},\quad i=1,2.
\ee
\la{mp}
\end{thm}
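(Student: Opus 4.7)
The plan is to apply a parabolic maximum principle to the pair $(c_1,c_2)$ simultaneously, exploiting the sign structure of the reaction term obtained after eliminating $\D\Phi$ via the Poisson equation (\ref{poisson}).

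First, I would expand $\div(\na c_i+z_ic_i\na\Phi)$ in the Nernst-Planck equation (\ref{np}) and substitute $\D\Phi=-(c_1-c_2)/\epsilon$ to rewrite each equation as
\be
\pa_tc_i+u\cdot\na c_i-D_i\D c_i-D_iz_i\na\Phi\cdot\na c_i+\fr{D_iz_ic_i(c_1-c_2)}{\epsilon}=0,\quad i=1,2,
\ee
with $z_1=1$ and $z_2=-1$. The essential structural feature is the sign of the zeroth-order reaction term: at any point where $c_1\ge c_2$, the term $D_1c_1(c_1-c_2)/\epsilon$ in the $c_1$ equation is non-negative and dissipative; symmetrically, at any point where $c_2\ge c_1$ the analogous term in the $c_2$ equation is non-negative.

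Second, to convert the formal maximum principle argument into a strict inequality, I introduce, for fixed $\eta>0$, the shifted quantities $\c_i(x,t)=c_i(x,t)-\eta t$. My claim is $\c_i(x,t)\le\Gamma$ for all $(x,t)\in\ov\Omega\times[0,\infty)$ and $i=1,2$; the theorem then follows by letting $\eta\to 0$. At $t=0$ we have $\c_i=c_i(0)\le\Gamma$, and on $\pa\Omega$, $\c_i=\gamma_i-\eta t\le\Gamma$. Suppose the claim fails at some positive time; by smoothness of the solution there exists a first time $t^*>0$ at which $\max_{i=1,2}\sup_{x\in\ov\Omega}\c_i(x,t^*)=\Gamma$. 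By the boundary inequality, this maximum is attained at an interior point; assume without loss of generality that it is attained by $\c_1$ at $x^*\in\Omega$. At $(x^*,t^*)$ the interior maximum conditions yield $\na c_1=0$ and $\D c_1\le 0$, so both $u\cdot\na c_1$ and $\na\Phi\cdot\na c_1$ vanish. Since $\c_1(x^*,t)\le\Gamma=\c_1(x^*,t^*)$ for $t\le t^*$, we have $\pa_t\c_1(x^*,t^*)\ge 0$, hence $\pa_tc_1(x^*,t^*)\ge\eta$. Finally, $c_1(x^*,t^*)=\Gamma+\eta t^*$ while by minimality of $t^*$, $c_2(x,t^*)\le\Gamma+\eta t^*$ for every $x\in\Omega$, so $c_1(x^*,t^*)\ge c_2(x^*,t^*)$ and $c_1(c_1-c_2)\ge 0$ at $(x^*,t^*)$. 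Substituting into the equation gives
\be
\eta\le\pa_tc_1(x^*,t^*)=D_1\D c_1(x^*,t^*)-\fr{D_1c_1(c_1-c_2)}{\epsilon}\le 0,
\ee
a contradiction.

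The main obstacle is precisely the nonlinear coupling of $c_1$ and $c_2$ through $\Phi$: neither equation admits a maximum principle in isolation because $c_i\D\Phi$ has indefinite sign depending on the other species. Treating both concentrations simultaneously and taking the maximum over both is what forces the dissipative sign of the reaction term at an interior maximum point, yielding the uniform bound $\Gamma$ independent of $\epsilon$. The $\eta t$ shift is a standard device to make the first-time argument strict, bypassing issues with the non-differentiability in $t$ of the maximum of the two $L^\infty$ norms.
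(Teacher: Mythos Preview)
Your proof is correct and shares the core idea with the paper's: both derive a contradiction at a first hitting time by exploiting the dissipative sign of the reaction term $D_iz_ic_i\rho/\epsilon$ at an interior spatial maximum, and both introduce a small parameter to manufacture strictness. The executions differ. The paper compares directly to a level $\Gamma'>\Gamma$ and locates the first time $t_0$ with $M(t_0)=\Gamma'$; it must then split into two cases according to whether only one or both of $m_1(t_0),m_2(t_0)$ equal $\Gamma'$. The equality case $m_1(t_0)=m_2(t_0)=\Gamma'$ is delicate because the reaction term can vanish at the maximal point, and the paper resolves it via left Dini derivatives of $m_i$ and the weighted combination $D_2m_1+D_1m_2$. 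Your $\eta t$ shift is more economical: it forces $\pa_tc_1(x^*,t^*)\ge\eta>0$ regardless of whether $c_1=c_2$ at the maximum, so the single inequality $\pa_tc_1\le D_1\D c_1-D_1c_1(c_1-c_2)/\epsilon\le 0$ already yields the contradiction and no case split is needed.

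One small point: the assertion $t^*>0$ is not automatic, since $\Gamma$ is by definition a maximum involving $\sup_\Omega c_i(0)$, so $\max_i\sup_x\c_i(x,0)$ may already equal $\Gamma$. You use $t^*>0$ both to push the maximum off the boundary (via $\gamma_i-\eta t^*<\Gamma$) and to obtain $\pa_t\c_1(x^*,t^*)\ge 0$ from the left. The standard repair is to shift by $\eta(1+t)$ rather than $\eta t$, or equivalently to prove $c_i\le\Gamma+\eta+\eta t$; then the inequality is strict at $t=0$ and $t^*>0$ follows.
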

\begin{proof} We define $m_i(t)=\sup_\Omega c_i(x,t)$, $i=1,2$, and $M(t)=\max\{m_1(t),m_2(t)\}$. Fix $\Gamma'>\Gamma$. We suppose for the sake of contradiction that for some $t>0$, we have $M(t)\ge\Gamma'$. Then, by continuity, there exists a first time $t_0>0$ when $M(t_0)=\Gamma'$ is attained. Without loss of generality, we assume $m_1(t_0)=\Gamma'$. We distinguish two cases: $m_2(t_0)<\Gamma'$ and $m_2(t_0)=\Gamma'$. We first consider the case $m_2(t_0)<\Gamma'$. Since $\Gamma'>\sup_{\pa\Omega}\gamma_1$, there exists an interior point $x_0\in\Omega$ where $c_1(x_0,t_0)=\Gamma'$. Thus, evaluating (\ref{np}) at the maximal point $(x_0,t_0)$ and using (\ref{poisson}), we obtain
\be
\pa_t c_1(x_0,t_0)\le-\fr{D_1}{\epsilon}c_1(x_0,t_0)(c_1(x_0,t_0)-c_2(x_0,t_0))<-\fr{D_1}{\epsilon}\Gamma'(\Gamma'-\Gamma')=0
\la{<0}
\ee
where we used the fact that at an interior maximal point the gradient vanishes and the Laplacian is non-positive. This is a contradiction, since, by the choice of $t_0$, we have $\pa_t c_1(x_0,t_0)\ge 0$.

For the case $m_2(t_0)=\Gamma'$, we need a different argument. Since $m_1(t_0)=m_2(t_0)=\Gamma'>\Gamma$, by continuity we know that for a short time interval leading up to $t_0$, the maximal points for $c_i$, $i=1,2$, are attained in the interior. That is, there exists $\delta>0$ such that for all $s\in[t_0-\delta,t_0]$, there exist interior points $x_{i}(s)\in\Omega$ such that $c_i(x_{i}(s),s)=m_i(s)$. Thus for $i=1$ and for each $s\in[t_0-\delta,t_0]$ and $r<s$, we have
\be
\begin{aligned}
m_1(s)-m_1(r)&=\sup_\Omega c_1(x,s)-\sup_\Omega c_1(x,r)\\
&\le c_1(x_{1}(s),s)-c_1(x_{1}(s),r).
\end{aligned}
\ee
Then, dividing both sides by $s-r$ and taking the limit supremum, we obtain as in the case $m_2(t_0)<\Gamma'$,
\be
\begin{aligned}
\limsup_{r\to s^-}\fr{m_1(s)-m_1(r)}{s-r}&\le \pa_t c_1(x_{1}(s),s)\\
&\le -\fr{D_1}{\epsilon}m_1(s)(m_1(s)-m_2(s))\\
&\le -\fr{D_1}{2\epsilon}m_1^2(s)+\fr{D_1}{2\epsilon}m_2^2(s).
\end{aligned}
\la{m1}
\ee
Similarly for $i=2$, we deduce
\be
\begin{aligned}
\limsup_{r\to s^-}\fr{m_2(s)-m_2(r)}{s-r}\le  \fr{D_2}{2\epsilon}m_1^2(s)-\fr{D_2}{2\epsilon}m_2^2(s).
\end{aligned}
\la{m2}
\ee
Multiplying (\ref{m1}) by $D_2$ and (\ref{m2}) by $D_1$ and adding, we obtain
\be
\limsup_{r\to s^-} \left(D_2\fr{m_1(s)-m_1(r)}{s-r}+D_1\fr{m_2(s)-m_2(r)}{s-r}\right)\le 0
\ee
for any $s\in[t_0-\delta,t_0]$. 
Thus, by monotonicity we have
\be
D_2m_1(t_0-\delta)+D_1m_2(t_0-\delta)\ge D_2m_1(t_0)+D_1m_2(t_0)=(D_1+D_2)\Gamma'.
\ee
In other words, at a strictly earlier time than $t_0$, we have $m_1(t_0-\delta)\ge\Gamma'$ or $m_2(t_0-\delta)\ge\Gamma'$, in either case, a contradiction. Therefore the proof is complete.
\end{proof}
\beg{rem} In two dimensions the smoothness of solutions follows from the fact that the initial data and boundary conditions are smooth. The degree of smoothness required by the result is $c_i\in C^2(\Omega)\cap C^1(\bar{\Omega})$ locally in time. The velocity does not enter in a quantitative manner in the arguments. In the case of 3D NPNS, although it does not participate quantitatively, the velocity needs to be assumed to be regular enough for the Navier-Stokes solutions to be known to exist.
\end{rem}

\end{section}

\begin{section}{Electroneutral Boundary Conditions}
In this last section, we consider electroneutral boundary conditions (\textbf{EN}) and show that in this case, the charge density $\rho$ converges to $0$ at an exponential rate. In contrast to the results of Section~\ref{taie}, we can show here that the convergence holds for any fixed $\epsilon>0$, and the rate is independent of $\epsilon$. 

\begin{thm}\la{decay} For global solutions $(c_1,c_2,\Phi,u)$ of the NPNS system with (\textbf{EN}) boundary conditions, there exist constants $\lambda>0$, depending on $\Omega$ and $D_i$, and $C>0$, depending additionally on initial conditions, such that
\be
\|\rho(t)\|_{L^2}\le Ce^{-\lambda t},\quad t\ge 0
\la{eb}
\ee
holds.
\end{thm}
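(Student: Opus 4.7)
\emph{Proof plan.} The approach is an $L^2$ energy estimate for $\rho$ that exploits the homogeneous Dirichlet condition $\rho|_{\pa\Omega}=0$ built into (\textbf{EN}) and extracts the $\epsilon$-independent exponential rate from the Dirichlet-Poincar\'e inequality on $\Omega$. I would first establish uniform-in-time $L^\infty$ bounds on $c_1, c_2$ (and hence on $\sigma=c_1+c_2\ge 0$ and on $\rho$) by a maximum principle argument for the two-by-two system adapted to (\textbf{EN}); the Neumann condition $n\cdot\na\sigma|_{\pa\Omega}=0$ together with the coupling $c_1=c_2$ on $\pa\Omega$ control possible boundary extrema, while an argument in the spirit of the proof in Section~4 handles interior extrema.

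With $D_\pm=(D_1\pm D_2)/2$, the evolution of $\rho$ reads
\[
\pa_t\rho+u\cdot\na\rho=\D(D_+\rho+D_-\sigma)+\div\bigl((D_+\sigma+D_-\rho)\na\Phi\bigr).
\]
Multiplying by $\rho$, integrating over $\Omega$, and using $\div u=0$, $\rho|_{\pa\Omega}=0$, and $-\epsilon\D\Phi=\rho$, I would obtain the identity
\[
\tfrac{1}{2}\tfrac{d}{dt}\|\rho\|_{L^2}^2+D_+\|\na\rho\|_{L^2}^2+\tfrac{D_+}{\epsilon}\int_\Omega\sigma\rho^2\,dx=-D_-\int_\Omega\na\rho\cdot\na\sigma\,dx+D_+\int_\Omega\rho\,\na\sigma\cdot\na\Phi\,dx-\tfrac{D_-}{2\epsilon}\int_\Omega\rho^3\,dx.
\]
All three terms on the left are nonnegative because $\sigma\ge 0$, yielding both a gradient dissipation $D_+\|\na\rho\|^2$ and a reactive dissipation $(D_+/\epsilon)\int\sigma\rho^2$.

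The exponential rate will then come from the Dirichlet-Poincar\'e inequality $\|\rho\|_{L^2}^2\le C_P\|\na\rho\|_{L^2}^2$, giving $\lambda\sim D_+/C_P$ depending only on $\Omega$ and the $D_i$. The right-hand side is absorbed as follows: the $\na\rho\cdot\na\sigma$ term via Young's inequality together with an auxiliary energy estimate for $\sigma$ providing $\na\sigma\in L^2_{tx}$ bounds; the $\rho\,\na\sigma\cdot\na\Phi$ term via Cauchy-Schwarz with weight $\sqrt\sigma$, so that the $\epsilon^{-1}$ factor picked up by $\na\Phi$ through $-\epsilon\D\Phi=\rho$ is absorbed into the reactive dissipation on the left; the cubic term via $|\int\rho^3|\le\|\rho\|_{L^\infty}\|\rho\|_{L^2}^2$ and the maximum principle bound, again absorbed into the reactive term. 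After absorption one obtains $\tfrac{d}{dt}\|\rho\|_{L^2}^2\le -2\lambda\|\rho\|_{L^2}^2$, and Gronwall's lemma yields (\ref{eb}).

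The main obstacle is arranging these absorption estimates so that only the Poincar\'e-derived $\epsilon$-independent rate survives: naive estimates on $\na\Phi$ inherit $\epsilon^{-1}$ factors through the Poisson equation, and the delicate structural cancellation of these factors against the reactive dissipation $(D_+/\epsilon)\int\sigma\rho^2$ is the crux of the argument. The constant $C$ in the final bound depends on $\|\rho(0)\|_{L^2}$ and on the $L^\infty$ bound on $c_i$ coming from the opening step.
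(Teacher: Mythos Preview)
Your identity is correct, and the cubic term does absorb into the reactive dissipation (via $|\int\rho^3|\le\int|\rho|^3\le\int\sigma\rho^2$ together with $|D_-|<D_+$). The other two absorptions, however, do not go through, and this is a genuine gap.

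For the term $D_+\int_\Omega\rho\,\na\sigma\cdot\na\Phi$: integrating by parts once more (using $\rho_{|\pa\Omega}=0$ and $-\epsilon\D\Phi=\rho$) gives
\[
D_+\int_\Omega\rho\,\na\sigma\cdot\na\Phi\,dx \;=\; -D_+\int_\Omega\sigma\,\na\rho\cdot\na\Phi\,dx \;+\; \fr{D_+}{\epsilon}\int_\Omega\sigma\rho^2\,dx.
\]
Thus the reactive dissipation on your left-hand side and this right-hand term differ \emph{exactly} by $-D_+\int\sigma\,\na\rho\cdot\na\Phi$; the ``good'' reactive term is an artifact of how you chose to integrate by parts, and absorbing the right-hand side into it is circular. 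Any weighted Cauchy--Schwarz estimate you set up will leave a residual equivalent to $\int\sigma\,\na\rho\cdot\na\Phi$, which through $\|\na\Phi\|\lesssim\epsilon^{-1}$ cannot be controlled uniformly in $\epsilon$. For the term $-D_-\int\na\rho\cdot\na\sigma$: Young's inequality leaves a residual $\|\na\sigma\|_{L^2}^2$, and knowing only $\na\sigma\in L^2_{t,x}$ yields decay of $\|\rho(t)\|_{L^2}$ but not exponential decay---the Duhamel integral $\int_0^te^{-\lambda(t-s)}\|\na\sigma(s)\|_{L^2}^2\,ds$ need not decay exponentially.

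The paper sidesteps both issues by testing the $\sigma$-equation with $\sigma$ and the $\rho$-equation with $\rho$ and \emph{adding}. The cross terms $\pm\int\rho\,\na\Phi\cdot\na\sigma$ then cancel exactly, and the only appearance of $\epsilon$ is through $-\tfrac{1}{\epsilon}\int\sigma\rho^2\le 0$, which is simply dropped. One obtains $\tfrac{d}{dt}Q+R\le 0$ for a nonnegative quadratic form $Q$ in $(\rho,\sigma-\bar\sigma,\rho-\bar\rho)$ and $R=\|\na\rho\|_{L^2}^2+\|\na\sigma\|_{L^2}^2$; Poincar\'e's inequality (Dirichlet for $\rho$, mean-zero for $\sigma-\bar\sigma$) gives $R\ge cQ$ with $c=c(\Omega,D_i)$, hence $Q(t)\le Q(0)e^{-ct}$ and $\|\rho\|_{L^2}^2\lesssim Q$. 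No $L^\infty$ maximum-principle bound is needed for the $L^2$ result.
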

\begin{rem} Global regularity of solutions of NPNS in 2D, or of Nernst-Planck equations coupled to time dependent Stokes equations in 3D, is a consequence of the a~priori $L^2$ control (\ref{eb}) \cite{cil}. In the case of 3D NPNS, we must assume that the velocity is regular enough (for instance,  $u \in L^4(dt; H^1(\Omega)^3)$). The arguments in the proof of Theorem~\ref{decay} do not involve the velocity in a quantitative manner.
\end{rem}
\begin{proof}
We consider the equations satisfied by $\rho=c_1-c_2$ and $\sigma=c_1+c_2$. Dividing (\ref{np}) by $D_i$ and summing in $i$ we obtain
\be
D_t\left(\fr{c_1}{D_1}+\fr{c_2}{D_2}\right)=\D\sigma+\div(\rho\na\Phi)
\ee
where 
\be
D_t=\pa_t+u\cdot\na
\la{material}
\ee
is the material derivative. Similarly, if we subtract the equation for $i=2$ from that of $i=1$, we obtain
\be
D_t\left(\fr{c_1}{D_1}-\fr{c_2}{D_2}\right)=\D\rho+\div(\sigma\na\Phi).
\ee
Then, we observe
\be
\bal
\fr{c_1}{D_1}+\fr{c_2}{D_2}
=\fr{1}{D_1D_2}(D_2c_1+D_1c_2)&=\fr{1}{D_1D_2}((D_2-D_1)c_1+D_1(c_1+c_2))\\
&=\fr{1}{D_1D_2}\left(\fr{D_2-D_1}{2}c_1+\fr{D_1+D_2}{2}c_1+D_1c_2\right),
\eal
\ee
so, in term of $\rho$ and $\sigma$,
\be
\bal
\fr{c_1}{D_1}+\fr{c_2}{D_2}
&=\fr{1}{D_1D_2}\left(\fr{D_2-D_1}{2}\rho+\fr{D_1+D_2}{2}\sigma\right)\\
&=\fr{1}{D_1D_2}\left(\delta\rho+D\sigma\right)
\eal
\ee
where 
\be
\bal
\delta&=\fr{D_2-D_1}{2}\\
D&=\fr{D_1+D_2}{2}.
\la{deltaD}
\eal
\ee
Similar calculations give
\be
\fr{c_1}{D_1}-\fr{c_2}{D_2}=\fr{1}{D_1D_2}(D\rho+\delta\sigma).
\ee
Therefore, $\rho$ and $\sigma$ satisfy the differential equations
\begin{align}
\fr{1}{D_1D_2}D_t(\delta\rho+D\sigma)&=\D\sigma+\div(\rho\na\Phi)   \la{dD}\\
\fr{1}{D_1D_2}D_t(D\rho+\delta\sigma)&=\D\rho+\div(\sigma\na\Phi)   \la{Dd}.
\end{align}
Then, we multiply (\ref{dD}) by $\sigma$ and integrate by parts, and using the boundary conditions (\textbf{EN}), we get
\be
\fr{1}{D_1D_2}\left(\fr{d}{dt}\int_\Omega \fr{D}{2} \sigma^2\,dx+\int_\Omega \delta(\pa_t\rho)\sigma \,dx+\int_\Omega \delta(u\cdot\na\rho)\sigma\,dx\right)+\int_\Omega|\na\sigma|^2\,dx=-\int_\Omega \rho\na\Phi\cdot\na\sigma\,dx.
\la{sig}
\ee
Next, we multiply (\ref{Dd}) by $\rho$ and integrate by parts, and using the boundary conditions (\textbf{EN}) and the incompressibility condition $\div u=0$, we obtain
\be
\fr{1}{D_1D_2}\left(\fr{d}{dt}\int_\Omega \fr{D}{2} \rho^2\,dx+\int_\Omega\delta(\pa_t\sigma)\rho\,dx -\int_\Omega \delta(u\cdot\na\rho)\sigma\,dx\right)+\int_\Omega|\na\rho|^2\,dx=\int_\Omega \div(\sigma\na\Phi)\rho\,dx.
\la{rho}
\ee
For the integral on the right hand side, we use the Poisson equation for $\Phi$ to get
\be
\int_\Omega\div(\sigma\na\Phi)\rho\,dx=\int_\Omega\rho\na\Phi\cdot\na\sigma\,dx -\fr{1}{\epsilon}\int_\Omega\sigma\rho^2\,dx\le\int_\Omega\rho\na\Phi\cdot\na\sigma\,dx
\la{last}
\ee
where we used the fact that $\sigma\ge 0$, which in turn follows from the fact that $c_1,c_2\ge 0$. The initial concentrations are nonnegative $c_1(0),c_2(0)\ge 0$, and the nonnegativity in preserved  \cite{ci,cil}. Using (\ref{last}), we add (\ref{sig}) and (\ref{rho}) to deduce
\be
\fr{1}{D_1D_2}\fr{d}{dt}\int_\Omega \left(\fr{D}{2}(\rho^2+\sigma^2)+\delta\rho\sigma\right)\,dx
+\int_\Omega |\na\sigma|^2+|\na\rho|^2\,dx\le 0.
\la{l2diss'}
\ee
Next, we define
\be
\bar\sigma = \fr{1}{|\Omega|}\int_\Omega\sigma \,dx,\quad \bar\rho = \fr{1}{|\Omega|}\int_\Omega\rho \,dx.
\ee 
Then, we obtain 
\begin{align}
&\int_\Omega \left(\fr{D}{2}(\rho^2+\sigma^2)+\delta\rho\sigma\right)\,dx\notag\\
&\quad
=\int_\Omega \left(\fr{D}{2}-\fr{\delta^2}{2D}\right)\rho^2+\fr{\delta^2}{2D}(\rho-\bar\rho)^2+\fr{D}{2}(\sigma-\bar\sigma)^2+\delta\rho(\sigma-\bar\sigma)+\fr{D}{2}\left(\bar\sigma+\fr{\delta}{D}\bar\rho\right)^2\,dx.
\la{mess}
\end{align}
By \eqref{deltaD}, the first term on the right hand side of \eqref{mess} is positive. Next, we make two observations. First,
\be
\bal
Q_1=&\int_\Omega\fr{\delta^2}{2D}(\rho-\bar\rho)^2+\fr{D}{2}(\sigma-\bar\sigma)^2+\delta\rho(\sigma-\bar\sigma)\,dx\\
=&\int_\Omega\fr{\delta^2}{2D}(\rho-\bar\rho)^2+\fr{D}{2}(\sigma-\bar\sigma)^2+\delta(\rho-\bar\rho)(\sigma-\bar\sigma)\,dx\\
=&\fr{1}{2}\int_\Omega\left(\fr{\delta}{D^\fr{1}{2}}(\rho-\bar\rho)+D^\fr{1}{2}(\sigma-\bar\sigma)\right)^2\,dx \ge 0.
\eal
\ee
Second, we observe that by integrating (\ref{dD}) and using the boundary conditions (\textbf{EN}), we reach the conclusion that the quantity $\delta\bar\rho+D\bar\sigma$ is independent of time. Therefore, taking the time derivative of (\ref{mess}), we arrive at
\be
\fr{d}{dt}\int_\Omega \left(\fr{D}{2}(\rho^2+\sigma^2)+\delta\rho\sigma\right)\,dx
=\fr{d}{dt}\left(\int_\Omega \left(\fr{D}{2}-\fr{\delta^2}{2D}\right)\rho^2\,dx +Q_1\right).
\ee
Now, we denote
\begin{align}
Q&=\fr{1}{D_1D_2}\left(\int_\Omega \left(\fr{D}{2}-\fr{\delta^2}{2D}\right)\rho^2\,dx+Q_1\right),\\
R&=\int_\Omega |\na\sigma|^2+|\na\rho|^2\,dx,
\end{align}
so that (\ref{l2diss'}) is equivalent to
\be
\fr{d}{dt}Q+R\le 0.
\la{l2diss}
\ee
Next we note that, by Poincar\'{e}'s inequality, we have the bound
\be
\bal
Q_1&\le \tilde{C}_\Omega\int_\Omega |\rho-\bar\rho|^2+|\sigma-\bar\sigma|^2\,dx\\
&\le C_\Omega R
\eal
\la{q1>r}
\ee
for constants $\tilde{C}_\Omega$ and $C_\Omega$ depending on $\Omega$ and $D_i$. We also have, again by the Poincar\'{e}  inequality,
\be
R\ge C'_\Omega\int_\Omega\rho^2\,dx
\la{r>rho2}
\ee 
for $C'_\Omega$ depending only on $\Omega$. Combining (\ref{q1>r}) and (\ref{r>rho2}), we obtain
\be
R\ge C_D Q
\la{r>q}
\ee
where $C_D$ is a constant depending on $\Omega$ and $D_i$. So using  (\ref{r>q}), the differential inequality (\ref{l2diss}) gives
\be
\fr{d}{dt}Q+C_DQ\le 0,
\ee
so that $Q(t)\le e^{-C_Dt}Q(0)$. Finally, defining
\be
P=\fr{1}{D_1D_2}\int_\Omega \left(\fr{D}{2}-\fr{\delta^2}{2D}\right)\rho^2\,dx
\ee
and recalling that $Q_1\ge 0$, we have
\be
\begin{aligned}
Q&\ge P
\la{q>p}
\end{aligned}
\ee
so that
\be
P(t)\le e^{-C_Dt}Q(0).
\ee
That is,
\be
\|\rho(t)\|_{L^2}^2\le Ce^{-C_Dt}
\ee
where
\be
C=D_1D_2\left(\fr{D}{2}-\fr{\delta^2}{2D}\right)^{-1}Q(0).
\ee
Therefore, the proof of Theorem~\ref{decay} is complete.
\end{proof}

We prove now a maximum principle.
\begin{thm}\la{uben}
For global smooth solutions $(c_1,c_2,\Phi,u)$ of the NPNS system with (\textbf{EN}) boundary conditions, the ionic concentrations $c_i$ obey the uniform in time bounds
\be
c_i(x,t)\le \max\left\{\sup_\Omega c_1(0), \sup_\Omega c_2(0)\right\},\quad i=1,2.
\ee
\end{thm}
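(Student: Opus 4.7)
The plan is to use a weighted $L^p$ energy estimate followed by the limit $p\to\infty$. Specifically, we will show that for every integer $p\ge 2$ the weighted functional
\[
\Psi_p(t):=\int_\Omega\bigl(D_2\, c_1^p+D_1\, c_2^p\bigr)\,dx
\]
is non-increasing in time. Since $\Psi_p(0)\le(D_1+D_2)|\Omega|M_0^p$, where $M_0=\max\{\sup_\Omega c_1(0),\sup_\Omega c_2(0)\}$, taking $p$-th roots and sending $p\to\infty$ --- using the standard fact that $\|c_i(t)\|_{L^p}\to\|c_i(t)\|_{L^\infty}$ on the bounded domain --- will yield $\max\{\|c_1(t)\|_{L^\infty},\|c_2(t)\|_{L^\infty}\}\le M_0$, which is exactly the stated pointwise bound.

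To derive the monotonicity of $\Psi_p$, we would test the Nernst-Planck equation for $c_i$ against $c_i^{p-1}$ and integrate over $\Omega$. The advection term $\int u\cdot\na c_i^p\,dx$ vanishes because $u$ is divergence free with $u|_{\pa\Omega}=0$, and the drift contribution is rewritten using $-\epsilon\D\Phi=\rho$. After integrating by parts one obtains
\[
\fr{d}{dt}\int_\Omega c_i^p\,dx = -D_i p(p-1)\!\int_\Omega c_i^{p-2}|\na c_i|^2 - \fr{D_iz_i(p-1)}{\epsilon}\!\int_\Omega c_i^p\rho\,dx + D_i\!\int_{\pa\Omega}\!\bigl(\pa_n c_i^p + z_i c_i^p\,\pa_n\Phi\bigr)dS.
\]
Multiplying the $i=1$ identity by $D_2$ and the $i=2$ identity by $D_1$ and adding, the boundary contributions combine into $D_1 D_2\int_{\pa\Omega}\bigl[\pa_n(c_1^p+c_2^p)+(c_1^p-c_2^p)\pa_n\Phi\bigr]dS$, which vanishes identically under the \textbf{(EN)} conditions: $c_1=c_2$ on $\pa\Omega$ kills the $\pa_n\Phi$ contribution, while $\pa_n(c_1+c_2)=0$ combined with $c_1^{p-1}=c_2^{p-1}$ on $\pa\Omega$ gives $\pa_n(c_1^p+c_2^p)=pc_1^{p-1}\bigl(\pa_n c_1+\pa_n c_2\bigr)=0$. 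Meanwhile, the interior reaction terms collapse to
\[
-\fr{D_1 D_2(p-1)}{\epsilon}\int_\Omega(c_1^p-c_2^p)(c_1-c_2)\,dx\le 0,
\]
because $x\mapsto x^p$ is monotone on $[0,\infty)$ and $c_1,c_2\ge 0$. Hence $\fr{d}{dt}\Psi_p\le 0$.

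The crucial step --- and the main obstacle --- is the balancing of the boundary contributions: the choice of weights $D_2$ and $D_1$ is exactly what the \textbf{(EN)} conditions require in order to annihilate both the $\pa_n c_i^p$ boundary integrals and the $c_i^p\,\pa_n\Phi$ drift boundary integrals simultaneously, and it is this weighting that makes the argument work for arbitrary, unequal diffusivities $D_1\ne D_2$. Once $\Psi_p$ is monotone, the passage to the $L^\infty$ limit is routine: from $\Psi_p(t)\le\Psi_p(0)$ we deduce $\min(D_1,D_2)^{1/p}\max\bigl(\|c_1(t)\|_{L^p},\|c_2(t)\|_{L^p}\bigr)\le\bigl((D_1+D_2)|\Omega|\bigr)^{1/p}M_0$, and sending $p\to\infty$ completes the proof.
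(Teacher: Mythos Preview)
Your proof is correct and takes a genuinely different route from the paper. The paper argues by contradiction via a pointwise maximum principle: assuming the running maximum $M(t)=\max_{s\le t}\max\{\sup c_1(s),\sup c_2(s)\}$ strictly increases, it shows (after a careful discussion of Rademacher differentiability of $m_i(t)=\sup c_i(\cdot,t)$) that the maximal point must migrate to $\partial\Omega$, then applies Hopf's lemma to force $\partial_n c_1>0$, which through the (\textbf{EN}) conditions $c_1=c_2$, $\partial_n(c_1+c_2)=0$ pushes $c_2$ above $M(t)$ in the interior --- a contradiction.

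Your weighted $L^p$ energy argument is more elementary: no Hopf lemma, no analysis of where maxima sit, no Lipschitz-differentiability bookkeeping. The key observation --- that the weights $D_2$ on $c_1^p$ and $D_1$ on $c_2^p$ simultaneously (i) make the boundary integrals collapse to $D_1D_2\int_{\partial\Omega}\bigl[\partial_n(c_1^p+c_2^p)+(c_1^p-c_2^p)\partial_n\Phi\bigr]=0$ under (\textbf{EN}) and (ii) give the interior reaction term the sign $-(p-1)D_1D_2\epsilon^{-1}\int(c_1^p-c_2^p)(c_1-c_2)\le 0$ --- is exactly the right structural insight for unequal diffusivities. (Note that the same weighting is essential for both the boundary and the interior reaction terms, not just the boundary as you emphasize.) On the other hand, the paper's argument is sharper in spirit: it works directly at the level of pointwise values and could in principle be localized, whereas your method passes through global $L^p$ norms and the bounded-domain fact $\|f\|_{L^p}\to\|f\|_{L^\infty}$.
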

\begin{proof}
The idea of the proof is as follows. We define
\be
\bal
m_i(t)&=\sup_\Omega c_i(x,t),\quad i=1,2,\\
m(t)&=\max\{m_1(t),m_2(t)\},\\
M(t)&=\max_{s\le t}m(s).
\eal
\ee
Then the statement of the theorem is equivalent to the statement that $M(t)=M(0)$ for all $t\ge 0$. For the sake of contradiction, if we assume that $M(t)$ in fact increases, then we have $M'(t)>0$ for some $t>0$. At time $t$, we deduce that (without loss of generality) $M(t)=m_1(t)\ge m_2(t)$ and that $\pa_t c_1(x,t)>0$ for all $x\in B_t^1=\{x\in\bar\Omega\,|\, c_1(x,t)=M(t)\}$. Then, an argument like that leading up to (\ref{<0}) allows us to deduce that in fact $B_t^1\subset\pa \Omega$. But then by Hopf's lemma, we conclude that $\pa_n c_1(x,t)>0$ for $x\in B_t\subset\pa\Omega$. However, then the boundary conditions force $\pa_n c_2(x,t)<0$, $c_2(x,t)=M(t)$. Consequently, it follows that at time $t$, $c_2$ attains an interior value exceeding $M(t)$, which is a contradiction.

In order to provide a rigorous  proof of Theorem~\ref{uben}, we note that in view of the fact that $c_i(x,t), i=1,2$ are smooth, we have that $m_i, m, M$ are Lipschitz in $t$ on any interval $[0,T]$. Indeed, there exist $x_i(t)\in\bar{\Omega}$ such that $m_i(t)=c_i(x_i(t),t)$. Then, for $t-s>0$ we have
\[\fr{c_i(x_i(s),t)-c_i(x_i(s),s)}{t-s}
\le\fr{m_i(t)-m_i(s)}{t-s}\le \fr{c_i(x_i(t),t)-c_i(x_i(t),s)}{t-s}, 
\]
and therefore
\[\left|\fr{m_i(t)-m_i(s)}{t-s}\right|
\le \sup_{\bar{\Omega}\times[0,T]} |\partial_t c_i|.
\]
In particular,  $m_i, m, M$ are differentiable a.e. and satisfy the fundamental theorem of calculus. This level of regularity is sufficient. 

We prove the following facts.

\noindent {\bf{(I) }} Let $m:[0,\infty)\to\mathbb{R}$ be locally Lipschitz (meaning Lipschitz on $[0,T]$ for any $T>0$), and let
\be
M(t)=\max_{s\le t}m(s).
\ee
Suppose for some $t>0$, $M'(t),m'(t)$ both exist and $M'(t)>0$. Then $M(t)=m(t)$ and $M'(t)=m'(t)$.

In order to check {\bf (I)}, we observe that $M'(t)>0$ implies that $M(s)<M(t)<M(r)$ for $s<t<r$. If not, then we had to have $M(s)=M(t)$ for some $s<t$. Then because it is nondecreasing, $M$ must be constant on the interval $[s,t]$, so that the left sided derivative of $M$ at $t$ is $0$, a contradiction. A similar argument is used for $t<r$. Next, we observe that $M'(t)>0$ implies that $m(t)=M(t)$. If not, then $M(t)=m(s)$ for some $s<t$, but then, from the previous observation, we obtain $M(s)<M(t)=m(s)\le M(s)$, which is a  contradiction. Lastly, we take $s<t$ and compute
\be
M(t)-M(s)=m(t)-M(s)\le m(t)-m(s)
\ee
so that dividing by $t-s$ and taking the limit $s\to t^-$, we obtain, $M'(t)\le m'(t)$. Similarly, by considering $s>t$, we obtain $M'(t)\ge m'(t)$.

\noindent{\bf{(II)}}  Let $c_i(x,t):\bar{\Omega}\times[0,\infty)\to\mathbb{R}$ be a smooth function with $\Omega\subset\mathbb{R}^d$ an open bounded set with smooth boundary. Let
\be
m_i(t)=\sup_{x\in\bar\Omega}c_i(x,t).
\ee
For each $t$, define $B_t^i=\{x\in\bar{\Omega}\,|\,c_i(x,t)=m_i(t)\}$. Suppose $m_i'(t)$ exists for some $t>0$. Then $\pa_tc_i(x,t)=m_i'(t)$ for each $x\in B_t^i$.

Indeed, we take $s<t$ and $x\in B_t^i$ and compute
\be
m_i(t)-m_i(s)=c_i(x,t)-m_i(s)\le c_i(x,t)-c_i(x,s)
\ee
from which we conclude that $m_i'(t)\le \pa_tc_i(x,t)$. A similar argument for $s>t$ gives the opposite inequality. An analogous argument gives the following fact.

\noindent{\bf (III)} Suppose $m_1(t)$ and $m_2(t)$ are locally Lipschitz, and let
\be
m(t)=\max\{m_1(t),m_2(t)\}.
\ee
Suppose $m'(t),m_1'(t),m_2'(t)$ exist for some $t>0$. Then for all $i\in\{1,2\}$ such that $m_i(t)=m(t)$, we have $m_i'(t)=m'(t)$.

To prove Theorem~\ref{uben}, we assume for the sake of contradiction that for some $T>0$ we have $M(T)>M(0)$. We define
\be
\bal
A_i&=\{t\in(0,T)\,|\, m_i'(t)\text{ exists}\},\quad i=1,2,\\
A_m&=\{t\in(0,T)\,|\, m'(t)\text{ exists}\},\\
A_M&=\{t\in(0,T)\,|\, M'(t)\text{ exists}\},\\
A&=A_1\cap A_2\cap A_m\cap A_M.
\eal
\ee
Since all the functions under consideration are locally Lipschitz, we know that $A$ has full measure, $|A|=T$. So, by the fundamental theorem of calculus, there exists some $t\in A\subset(0,T)$ such that $M'(t)>0$. By the considerations above, we conclude without loss of generality that  $\pa_t c_1(x,t)=M'(t)>0$ and $c_1(x,t)=m_1(t)=M(t)\ge m_2(t)$ for all $x\in B_t^1$.\\
\indent We claim that $B_t^1\subset\pa\Omega$. Indeed, if there were some $x\in B_t^1\cap \Omega$, then evaluating (\ref{np}) at $(x,t)$, and using $\na c_1(x,t)=0$ and $\D c_1(x,t)\le 0$, we obtain $\pa_tc_1(x,t)\le 0$, a contradiction. Now we fix $x\in B_t^1\subset\pa\Omega$. Then, at this boundary point (and at time $t$) the function $F=-\pa_tc_1-\fr{D_1}{\epsilon}\rho$ satisfies $F< 0$. We take a small open subset $U\subset\Omega$ that shares an open boundary portion with $\Omega$, including the point $x$. We choose the subset to be small enough i.e. uniformly close enough to $x$, so that $F_{|U}(\cdot,t)\le 0$. Then, restricted to $U$, we have that $c_1$ satisfies, at time $t$,
\be
-D_1\D c_1+(u-D_1\na\Phi)\cdot\na c_1=F\le 0.
\ee
Thus, by Hopf's lemma we conclude that $\pa_n c_1(x,t)>0$. Then, the boundary conditions imply that $\pa_n c_2(x,t)<0$ and $c_2(x,t)=M(t)$. That is, $c_2$ attains an interior value strictly greater than $M(t)$ at time $t$, a contradiction. This completes the proof.

\end{proof}

We prove now exponential decay in $L^p$ for all $p<\infty$.

\begin{cor}
For global smooth solutions $(c_1,c_2,\Phi,u)$ of the NPNS system with (\textbf{EN}) boundary conditions, there exist constants $\lambda_p>0$ depending on $\Omega$, $D_i$ and $p<\infty$, and $C_p>0$ depending additionally on initial conditions, such that
\be
\|\rho(t)\|_{L^p}\le C_pe^{-\lambda_p t},\quad t\ge 0
\ee
holds.
\end{cor}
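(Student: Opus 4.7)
The plan is to combine the two results just established: Theorem~\ref{decay} provides the $L^2$ exponential decay $\|\rho(t)\|_{L^2}\le Ce^{-\lambda t}$, while Theorem~\ref{uben} yields a uniform-in-time $L^\infty$ bound on each $c_i$, and hence on $\rho=c_1-c_2$. Writing $M_0=\max\{\sup_\Omega c_1(0),\sup_\Omega c_2(0)\}$, we have $\|\rho(t)\|_{L^\infty}\le 2M_0$ for all $t\ge 0$. These two ingredients together control all intermediate $L^p$ norms by interpolation.

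For $2\le p<\infty$, I would apply the standard interpolation inequality
\be
\|\rho(t)\|_{L^p}\le \|\rho(t)\|_{L^2}^{2/p}\,\|\rho(t)\|_{L^\infty}^{1-2/p}
\le C^{2/p}(2M_0)^{1-2/p}\,e^{-\frac{2\lambda}{p}\,t},
\ee
which gives the claimed bound with $\lambda_p=2\lambda/p$ and $C_p=C^{2/p}(2M_0)^{1-2/p}$. For $1\le p<2$, the boundedness of $\Omega$ and H\"older's inequality give
\be
\|\rho(t)\|_{L^p}\le |\Omega|^{\frac{1}{p}-\frac{1}{2}}\,\|\rho(t)\|_{L^2}\le |\Omega|^{\frac{1}{p}-\frac{1}{2}}Ce^{-\lambda t},
\ee
so one may take $\lambda_p=\lambda$ and $C_p=|\Omega|^{1/p-1/2}C$ on this range.

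There is no real obstacle here; the only point requiring a note is that Theorem~\ref{uben} was stated for global smooth solutions, which matches the hypothesis of the corollary, so the uniform $L^\infty$ bound on $\rho$ is available for all $t\ge 0$. The dependence of $\lambda_p$ on $\Omega$, the $D_i$, and $p$ comes through the corresponding dependence of $\lambda$ in Theorem~\ref{decay}; the dependence of $C_p$ on the initial data enters via $C$ from Theorem~\ref{decay} and via $M_0$ from Theorem~\ref{uben}. This completes the sketch.
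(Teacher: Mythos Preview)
Your proposal is correct and matches the paper's own argument, which simply states that the result follows by interpolation from Theorem~\ref{decay} and Theorem~\ref{uben}. You have supplied the explicit interpolation details (and even the easy $1\le p<2$ case), but the approach is identical.
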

\begin{proof} The proof follows by interpolation from Theorem~\ref{decay} and
Theorem~\ref{uben}
\end{proof}
\end{section}

\vspace{.5cm}

{\bf{Acknowledgment.}} The work of PC was partially supported by NSF grant DMS-
171398.

\end{document}